\newtheorem{theorem}{Theorem}[section]
\newtheorem{lemma}[theorem]{Lemma}
\newtheorem{remark}[theorem]{Remark}
\newenvironment{proof}[1][Proof]{\noindent \emph{#1.} }{\hfill \
\rule{0.5em}{0.5em}}
\makeatletter\@addtoreset{equation}{section}\makeatother
\makeatletter\@addtoreset{figure}{section}\makeatother
\makeatletter\@addtoreset{table}{section}\makeatother
\begin{document}

\title{Fast iterative solution of the Bethe-Salpeter eigenvalue problem 
using low-rank and QTT tensor approximation}

\author{Peter Benner\thanks{Max Planck Institute for Dynamics of Complex Technical Systems, Sandtorstr.~1, 
D-39106 Magdeburg, Germany
({\tt benner@mpi-magdeburg.mpg.de})}\and
Sergey Dolgov\thanks{University of Bath, The Avenue, Bath, BA2 7AY, United Kingdom ({\tt S.Dolgov@bath.ac.uk}). 
S. Dolgov gratefully acknowledges funding
from the Engineering and Physical Sciences Research Council (EPSRC) Fellowship EP/M019004/1. 
This work was mainly conducted when S. Dolgov worked at the Max Planck Institute for 
Dynamics of Complex Technical Systems, Magdeburg, Germany.} \and
   Venera Khoromskaia\thanks{Max Planck Institute for
        Mathematics in the Sciences, Leipzig;
        Max Planck Institute for Dynamics of Complex Systems, Magdeburg, Germany ({\tt vekh@mis.mpg.de}).}
        \and Boris N. Khoromskij\thanks{Max Planck Institute for
        Mathematics in the Sciences, Inselstr.~22-26, D-04103 Leipzig,
        Germany ({\tt bokh@mis.mpg.de}).}\\        
        }


\maketitle
\begin{abstract}
In this paper, we  study and implement the structural iterative eigensolvers for the large-scale
eigenvalue problem in the Bethe-Salpeter equation (BSE) based on the reduced basis approach
via low-rank factorizations in generating matrices, introduced in the previous paper.
The approach reduces numerical costs down to $\mathcal{O}(N_b^2)$ 
in the size of atomic orbitals basis set, $N_b$, instead of 
practically intractable $\mathcal{O}(N_b^6)$ complexity scaling for
the direct diagonalization of the BSE matrix.
As an alternative to rank approximation of the static screen interaction 
part of the BSE matrix, we propose to restrict it to a small active sub-block, 
with a size balancing the storage for rank-structured representations
of other matrix blocks. We demonstrate that the enhanced reduced-block approximation  
exhibits higher precision within the controlled numerical cost,
providing as well a distinct two-sided error estimate for the BSE eigenvalues. 
It is shown that further reduction of the asymptotic computational cost 
is possible due to ALS-type iteration in block tensor train (TT) format applied to
the quantized-TT (QTT) tensor representation of both long eigenvectors 
and rank-structured matrix blocks.
The QTT-rank of these entities possesses  almost the same magnitude as 
the number of occupied orbitals in the molecular systems, $N_o$, hence
the overall asymptotic complexity for solving the BSE problem can be 
estimated by $\mathcal{O}(\log(N_o) N_o^{2})$.
We confirm numerically a considerable decrease in computational time 
for the presented iterative approach applied to various compact and
chain-type molecules, while supporting sufficient accuracy.  

\end{abstract}

\noindent\emph{Key words:}
Bethe-Salpeter equation, Hartree-Fock calculus, 
tensor decompositions, quantized-TT format, model reduction, structured eigensolvers,
low-rank matrix.

\noindent\emph{AMS Subject Classification:} 65F30, 65F50, 65N35, 65F10

\section{Introduction}\label{Introd:MP2}


This paper continues the previous article \cite{BeKhKh_BSE:15} where the reduced basis approach
to the solution of Bethe-Salpeter algebraic eigenvalue problem 
was introduced, based on the idea of 
low-rank plus diagonal approximation to the matrix blocks and then solving the 
small size spectral problem via Galerkin projection onto the reduced basis set.

The Bethe-Salpeter equation (BSE) \cite{BeSa:51}, \cite{Hedin}  offers one of the 
commonly used mathematical models for {\it ab initio} computation of the 
absorption spectra for molecules or surfaces of solids, see also
\cite{ReOlRuOni:02,OniReRu:02,SchGluHaBe:03,SBotti:14,ReToTeHeSa:15}.
The BSE approach leads to the challenging computational task on the solution of a large eigenvalue
problem for fully populated (dense) matrix, that, in general, is non-symmetric.
 The size of the BSE matrix scales  quadratically $\mathcal{O}(N^2_b)$
in a size $N_b$ of the atomic orbitals basis sets,  commonly used in \emph{ab initio} electronic 
structure calculations. 
Hence, the direct diagonalization of $\mathcal{O}(N_b^{6})$-complexity becomes 
prohibited even for moderate size molecules. 

Methods for solving partial eigenvalue problems for matrices with a special structure 
as in the BSE eigenvalue problem 
have been intensively studied in the literature.  These structures are related to the so 
called Hamiltonian matrices, exposing a particular block pattern.  
Papers and books treating Hamiltonian 
eigenvalue problems include \cite{BeFa:97,BeMeXu:98,Kressner_book:05,FaKre:06}, 
see also the recent survey \cite{BGFa:15} and the references therein. 
Special cases of the BSE and other eigenvalue problems related to Hartree-Fock approximations 
lead to anti-block-diagonal 
Hamiltonian eigenproblems that can be solved by special techniques based on minimization 
principles \cite{BaiLi:12,BaiLi:13}. The algebraic structure of the BSE matrix is not that 
of a Hamiltonian matrix in the general case, but yields a so called complex $J$-symmetric 
matrix.  Theory and numerical solution of such eigenvalue problems are discussed in
\cite{BunByeMehrm:92,MMT03,MMMM,Meh08,BeFaYa:15}, where the particular instance of the BSE 
matrix is considered in \cite{BeFaYa:15}.  Other structural eigensolvers tailored for electronic 
structure calculations are discussed 
in \cite{RoGeSaBa:08,RoLuGa:10,DeSaStJaCoLo:12,NaPoSaad:13,LinSaadYa:15,ShJoYaDeLo:16}.

Recall that \cite{BeKhKh_BSE:15} introduces and studies
a reduced basis method for the approximate numerical solution of
the BSE algebraic eigenvalue problem that is well suited for Krylov subspace type algorithms. 
This approach is based on model reduction via projection 
onto a {\it reduced basis}, which is constructed by using the eigenvectors
of a simplified  system matrix obeying a diagonal plus low-rank data-sparse structure.
The  reduced basis method in \cite{BeKhKh_BSE:15} includes two main computational steps. 
First, the diagonal plus low-rank approximation to the fully populated
blocks in the BSE matrix is calculated, enabling an easier partial eigenvalue solver for a large
auxiliary system relying only on matrix-vector multiplications with rank-structured matrices.
Second, a small subset of eigenvectors from the auxiliary eigenvalue problem
is selected to build the Galerkin projection of the exact BSE system onto this
reduced basis set. The adaptive choice of the rank parameters is determined by the 
$\varepsilon$-thresholding in the matrix factorizations.

Following \cite{BeKhKh_BSE:15}, we use the particular description of the BSE matrix presented
in \cite{ReTouSa1:13}. 
We build up the BSE system matrix by using the complete output of
the Hartree-Fock calculations including rank-structured representation
of the two electron integrals (TEI) tensor in the molecular orbital basis 
precomputed by a grid-based tensor approach \cite{vekh:13,VeKhBoKhSchn:12,VeKhorMP2:13,VeKhorTromsoe:15}.

In this paper, we study and implement the structured iterative solvers for the large-scale
BSE eigenvalue problem, based on reduced basis approach
via low-rank factorizations in generating matrices \cite{BeKhKh_BSE:15}.
As the alternative  to problematic low-rank approximation of the static screen interaction 
part in the BSE matrix we propose to implement the matrix-vector product 
with this matrix block  using  only its  restriction to a small sub-block, 
with a size that balances the complexity of rank-structured representations
of the other parts in the system matrix. We show numerically that this enhanced representation considerably 
improves accuracy of the solution under the controlled numerical cost.
The approach reduces the numerical expense of the direct diagonalization down to $\mathcal{O}(N_b^2)$ 
in the size of the atomic orbitals basis set, $N_b$. 

Several iterative schemes are considered for both the 
Tamm-Dancoff approximation (TDA) and the full BSE
$2\times 2$ block system. The most efficient subspace iteration is based on
application of the matrix inverse, which for our matrix formats can be evaluated in a structural form by 
using the Sherman-Morrison formula. Numerical tests confirm the considerable 
decrease in computational time for the presented approach, 
while supporting the sufficient accuracy.

Further reduction of the numerical cost can be achieved by adapting the
ALS-type iteration (in particular, DMRG iteration)
for computing the eigenvectors in the block-QTT 
tensor representation \cite{DoKhSavOs_mEIG:13}, where the skeleton vectors of a 
low-rank part of the matrix are further approximated in the QTT format.
Application of the QTT-approximation is motivated by the observation, known
from  \cite{VeKhorMP2:13}, that the  generating Cholesky factors 
in the TEI tensor exhibit the average QTT-ranks proportional 
only to the number of occupied orbitals in the molecular system, $N_{o}$,
but they do not depend on the total BSE matrix size, $\mathcal{O}(N_b^2)$.
For eigenvectors in the block-QTT format, the QTT ranks are even smaller, 
typically proportional to the number of the sought eigenvectors,
which makes this approach to the BSE very competitive.

 The rest of the paper is organized as follows. In Section \ref{sec:RBA_BSE} we recall the
reduced basis approach to BSE problem introduced in  \cite{BeKhKh_BSE:15},
based on {low-rank} factorization of the BSE matrix blocks. 
Next, in Section \ref{sec:Iter_RedBlock_BSE} we describe the enhanced structural 
representation of the BSE system matrix by the reduced-block approximation to the 
the static screen interaction sub-matrix. This gainfully supplements the diagonal
plus low-rank decompositions of the remaining part of the matrix.
The enhanced structured approximation  improves the accuracy of the reduced basis method 
as justified by numerical simulations.
Moreover, it provides guaranteed upper and lower error bounds for 
exact eigenvalues of the BSE problem.
Section \ref{sec:Iter_Struct_BSE} describes structural iterative solvers for 
the central part of the spectrum in the simplified auxiliary problem.
In this way, the rank-structured approximation to the matrix inverse is constructed
based on the Sherman-Morrison formula.
Section \ref{sec:QTT_BSE} discusses the benefits of structured iterative solver 
based on the QTT tensor approximation of vectors and matrices in the framework
of ALS-type subspace iterations in block-QTT format. In particular, we 
present and analyze numerically the algorithm for solving the BSE problem 
in  $\mathcal{O}(\log(N_o)N_o^2)$ complexity scaling,
where $N_o \ll N_{b}$ denotes the number of occupied molecular orbitals.
Conclusions underline the main results and outlook directions for forthcoming work.

\section{Reduced basis approach to BSE problem revisited}\label{sec:RBA_BSE}

The construction of the BSE matrix includes  
computations of several auxiliary quantities \cite{ReTouSa1:13,BeKhKh_BSE:15} 
represented in terms of energy spectrum $\varepsilon_j$, $j=1,...,N_b$, and
the two-electron integrals (TEI) matrix projected onto the Hartree-Fock molecular orbital basis, 
$$
{ V}=[v_{ia,jb}]\quad a, b \in {\cal I}_{v}:=\{N_{o}+1,\ldots,N_{b}\},
\quad i,j\in {\cal I}_{o}:=\{1,\ldots,N_{o}\},
$$
where $N_b$ is the number of GTO basis functions and 
$N_{o}$ denotes the number of occupied orbitals 
(see \cite{VeKhorMP2:13,BeKhKh_BSE:15} for more details).

The $2\times 2$-block matrix representation of the Bethe-Salpeter equation 
reads as the following eigenvalue problem determining  the excitation energies $\omega_n$:
\begin{equation} \label{eqn:BSE-GW1}
   F
 \begin{pmatrix}
 {\bf x}_n\\
{\bf y}_n\\
\end{pmatrix}
\equiv
\begin{pmatrix}
{ A}   &  { B} \\
{ B}^\ast  &  { A}^\ast  \\
\end{pmatrix}
\begin{pmatrix}
 {\bf x}_n\\
{\bf y}_n\\
\end{pmatrix}
= \omega_n
\begin{pmatrix}
{I}   &  { 0} \\
{ 0}  &  { -I}  \\
\end{pmatrix}
\begin{pmatrix}
 {\bf x}_n\\ {\bf y}_n\\
\end{pmatrix},
\end{equation}
where the matrix blocks of size $N_{ov} \times N_{ov}$, with $N_{ov}=N_{o}(N_{b}-N_{o})$,
are defined by
\begin{equation}
 A=  \boldsymbol{\Delta \varepsilon} + V - \overline{W},\quad
 B= {V} - \widetilde{W}.
\label{eq:AB_ex}
\end{equation}
Here, the diagonal part is given by the ''energy'' matrix 
\[
\boldsymbol{\Delta \varepsilon}=[\Delta \varepsilon_{ia,jb}]\in \mathbb{R}^{N_{ov}\times N_{ov}}:
\quad \Delta \varepsilon_{ia,jb}= (\varepsilon_a - \varepsilon_i)\delta_{ij}\delta_{ab} ,
\]
that can be represented in the Kronecker product form
\[
\boldsymbol{\Delta \varepsilon} =
I_{o}\otimes \mbox{diag}\{\varepsilon_a: a\in {\cal I}_v\} -
\mbox{diag}\{\varepsilon_i: i\in {\cal I}_o\}\otimes I_{v},
\]
where $I_o$ and $I_v$ are the identity matrices on respective index sets. 
Matrices $\widetilde{W}$ and $\overline{W}$ are obtained by certain transformations of
the matrix $V$.

The matrices $V$ and $\widetilde{W}$ are proven to have small
$\epsilon$-rank (see \cite{VeKhBoKhSchn:12,VeKhorMP2:13} and \cite{BeKhKh_BSE:15}, respectively).
In particular, there holds 
\begin{equation} \label{eqn:L_V_factor}
 V \approx L_V L_V^T,\quad L_V\in \mathbb{R}^{N_{ov} \times R_V}, \quad R_V \leq R_B,
\end{equation}
with the rank estimates $R_V =R_V(\varepsilon) =\mathcal{O}(N_b |\log \varepsilon |)$, and 
$rank(\widetilde{W})\leq rank(V)$.
It was found that the matrix $\overline{W}$ can be approximated by the low-rank substitute only
up to the limited precision $\epsilon$, so that computationally unexpensive approach
to get rid of this limitation may be the rank approximation 
with the constraints $rank(\overline{W})\leq rank(V)$, see \cite{BeKhKh_BSE:15}.

Matrices in the form (\ref{eqn:BSE-GW1}) are called as the $J$-symmetric, see 
\cite{BeFaYa:15} for implications on the algebraic properties of the BSE matrix.
Solutions of equation (\ref{eqn:BSE-GW1}) come in pairs: excitation energies $\omega_n$
with eigenvectors $({\bf x}_n,{\bf y}_n)$, and de-excitation energies
$-\omega_n$ with eigenvectors $({\bf x}_n^\ast,{\bf y}_n^\ast)$.
The spectral problem (\ref{eqn:BSE-GW1}) can be rewritten  in the equivalent form
\begin{equation} \label{eqn:BSE-F1}
 F_1
 \begin{pmatrix}
 {\bf x}_n\\
{\bf y}_n\\
\end{pmatrix}
\equiv
\begin{pmatrix}
{ A}   &  { B} \\
-{ B}^\ast  &  -{ A}^\ast  \\
\end{pmatrix}
\begin{pmatrix}
 {\bf x}_n\\
{\bf y}_n\\
\end{pmatrix}
= \omega_n
\begin{pmatrix}
 {\bf x}_n\\ {\bf y}_n\\
\end{pmatrix}.
\end{equation}

The dimension of the matrix in (\ref{eqn:BSE-GW1}) is $2 N_o N_v \times 2 N_o N_v$, 
where $N_o$ and $N_v$ denote the number of occupied and virtual orbitals, respectively.
In general, $N_o N_v$ is asymptotically of the order of  $\mathcal{O}(N_b^2)$, 
i.e. the spectral problem (\ref{eqn:BSE-GW1}) may become computationally extensive
even for moderate size molecules, say for $N_b \approx 100$.
Indeed, the direct eigenvalue solver for (\ref{eqn:BSE-GW1}) (full diagonalization) 
appears to be infeasible due to $\mathcal{O}(N_b^{6})$ complexity scaling.

The main idea of the {\it reduced basis approach} introduced in \cite{BeKhKh_BSE:15} 
can be described as follows.
Instead of solving the partial eigenvalue problem for finding of, say,
$m_0$ eigenpairs satisfying equation (\ref{eqn:BSE-F1}), we first solve the slightly
simplified auxiliary spectral problem with a modified matrix $F_0$.  
The approximation $F_0$ is obtained from $F_1$ by using low-rank approximation of matrices 
\begin{equation} \label{eqn:RankApprW}
\overline{W} \mapsto \overline{W}_r = L_W L_W^\top, \quad \mbox{and} 
\quad  \widetilde{W} \mapsto \widetilde{W}_r = Y Z^\top
\end{equation}
in the matrix blocks $A$ and $B$, respectively, i.e., $A$ and $B$ are replaced by
\begin{equation} \label{eqn:BSE-Reduce_Block}
A   \mapsto A_0:=\boldsymbol{\Delta \varepsilon} + V -  \overline{W}_r \quad \mbox{and} \quad
B \mapsto B_0: = {V} -  \widetilde{W}_r,
\end{equation}
where we assume for simplicity $rank(\overline{W}_r)\leq r$ and $rank(\widetilde{W}_r)\leq r$. 
Here we take into account that the matrix $V$, precomputed by tensor-based Hartree-Fock 
solver \cite{VeKhorTromsoe:15}, is already represented in the low-rank format (\ref{eqn:L_V_factor})
inherited from the Cholesky decomposition  of the TEI matrix $B$, see \cite{VeKhorMP2:13,BeKhKh_BSE:15}.

The modified auxiliary problem reads
\begin{equation} \label{eqn:BSE-Reduced}
 F_0
 \begin{pmatrix}
 {\bf u}_n\\
{\bf v}_n\\
\end{pmatrix}\equiv
\begin{pmatrix}
{ A}_0   &  { B}_0 \\
-{B}_0^\ast  &  -{A}_0^\ast  \\
\end{pmatrix}
\begin{pmatrix}
 {\bf u}_n\\
{\bf v}_n\\
\end{pmatrix}
= \lambda_n
\begin{pmatrix}
 {\bf u}_n\\ {\bf v}_n\\
\end{pmatrix}.
\end{equation}
This eigenvalue problem is a simplification of  (\ref{eqn:BSE-F1}), since
now the matrix blocks $A_0$ and $B_0$, defined in (\ref{eqn:BSE-Reduce_Block}), 
are composed of diagonal and low-rank matrices, see Figures \ref{fig:Matr_A0} and \ref{fig:Matr_B0}
illustrating the data sparse structure of these matrix blocks.
\begin{figure}[htbp]
\centering
\includegraphics[width=11.0cm]{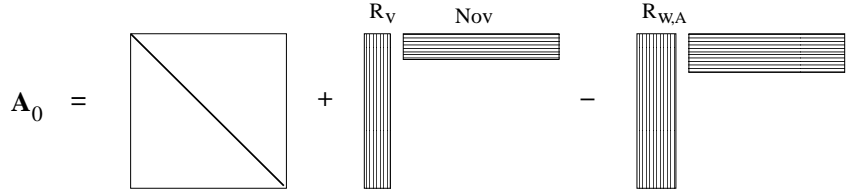}\quad 
\caption{\small Diagonal plus low-rank structure of the matrix $A_0$.}
\label{fig:Matr_A0}
\end{figure}

Having computed the set of eigenpairs $\{(\lambda_n, \psi_n)=(\lambda_n,({\bf u}_n,{\bf v}_n)^T)\}$, 
corresponding to $m_0 $ nearest to zero eigenvalues
(middle part of the spectrum) of the modified problem (\ref{eqn:BSE-Reduced}),
we solve the full eigenvalue
problem  for the reduced matrix (reduced model) obtained by projection of the initial equation onto the
problem adapted small basis set $\{\psi_n\}_{n=1}^{m_0}$ of size $m_0$.
\begin{figure}[htbp]
\centering
\includegraphics[width=8.0cm]{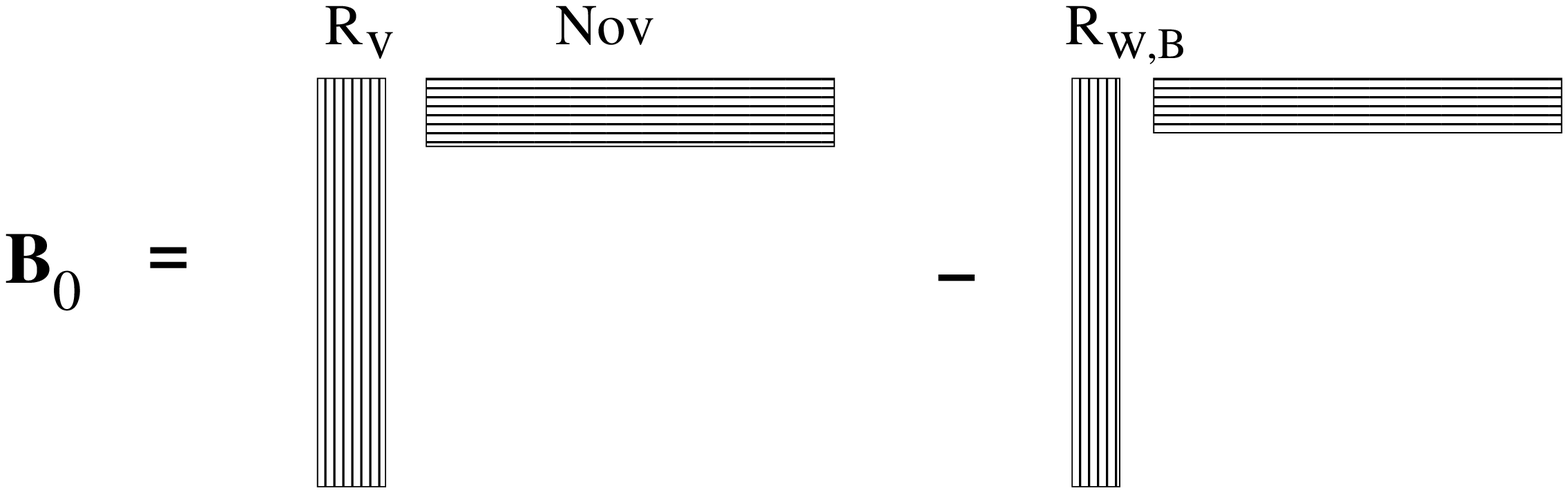}
\caption{\small Low-rank structure of the matrix $B_0$.}
\label{fig:Matr_B0}
\end{figure}

Define a matrix $G_1 = [\psi_1,...,\psi_{m_0}] \in\mathbb{R}^{2N_{ov}\times m_0}$, whose columns 
span  eigenvectors of the reduced basis, 
compute the related Galerkin  and mass matrices by projection onto the reduced basis 
specified by the columns in $G_1$,
\[
M_1= G_1^T F_1 G_1\in \mathbb{R}^{m_0\times m_0}, \quad S_1 = G_1^T G_1 \in \mathbb{R}^{m_0\times m_0},
\]
and then solve the reduced generalized eigenvalue problem of small size $m_0\times m_0$,
\begin{equation} \label{eqn:BSE-Red-Galerk}
 M_1 {\bf q}_n = \gamma_n S_1 {\bf q}_n, \quad {\bf q}_n \in \mathbb{R}^{m_0 }.
\end{equation}
The portion of $m_0$  eigenvalues $\gamma_n$, 
is thought to be very close to the lowers  excitation energies
$\omega_n$, ($n=1,\ldots,m_0$) in the initial spectral  problem (\ref{eqn:BSE-GW1}).

The so-called Tamm-Dancoff approximation (TDA) simplifies the equation (\ref{eqn:BSE-F1}) 
to a standard Hermitian eigenvalue problem
\begin{equation} \label{eqn:BSE-Tamm-Danc}
A {\bf x}_n = \mu_n {\bf x}_n, \quad {\bf x}_n \in \mathbb{R}^{N_{ov}}
\quad {A} \in \mathbb{R}^{N_{ov}\times N_{ov}}
\end{equation}
with the factor two smaller matrix size $N_{ov}$. 
The reduced basis approach via low-rank approximation
can be applied directly to the TDA equation, 
such that the simplified auxiliary problem reads
$$
A_0 {\bf u}= \lambda_n {\bf u},
$$
where we are interested in finding $m_0$ smallest eigenvalues.

Extensive numerical tests confirm the efficiency of the reduced model approach 
applied to both TDA and BSE problems for a number of single molecules,
as well as to chain type systems \cite{BeKhKh_BSE:15}.

Though the auxiliary eigenvalue equation (\ref{eqn:BSE-Reduce_Block}), (\ref{eqn:BSE-Reduced})
is much simpler than  (\ref{eqn:BSE-F1}), the  computation of dozens of eigenvectors
in (\ref{eqn:BSE-Reduced}) corresponding to the middle part of the spectrum remains 
to be the challenging numerical task since the traditional algebraic solvers 
{often converge slowly. As a remedy, one can perform}
matrix-vector operations with the inverse matrix $A_0^{-1}$ or $F_0^{-1}$. 
The efficient construction and implementation of the structured matrix inverse 
$A_0^{-1}$ and $F_0^{-1}$ will be addressed in section \ref{sec:Iter_Struct_BSE}.

\section{Approximating matrix $\overline{W}$  in reduced-block format}
\label{sec:Iter_RedBlock_BSE}

Taking into account limitations of the low-rank decomposition to the 
static screen interaction matrix $\overline{W}$, in what follows, we introduce the 
alternative way to the data-sparse approximation of this matrix based on its restriction to 
a smaller-size active sub-matrix. 
\begin{figure}[htbp]
\centering
\includegraphics[width=6.0cm]{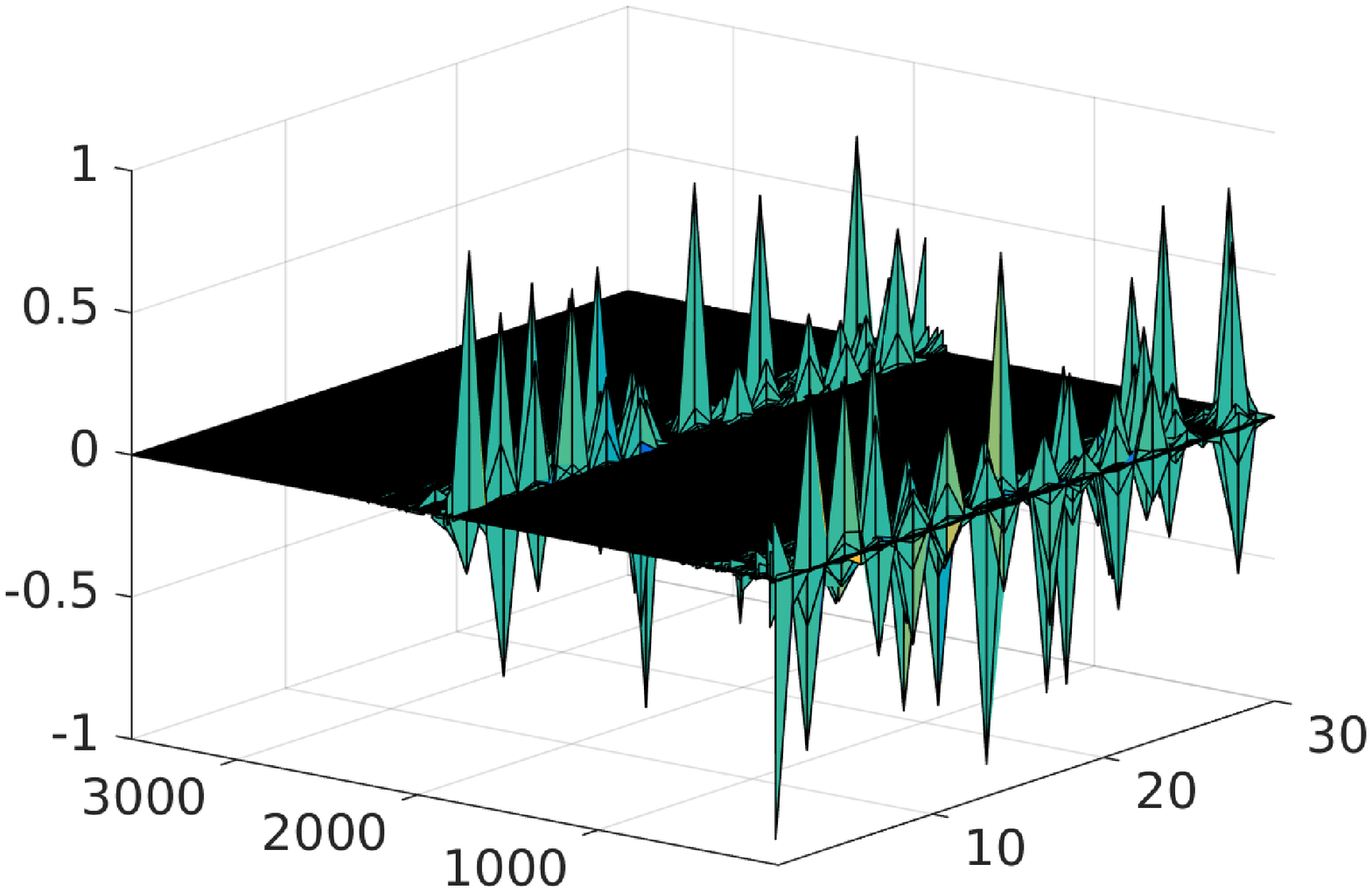}\quad\quad
\includegraphics[width=6.0cm]{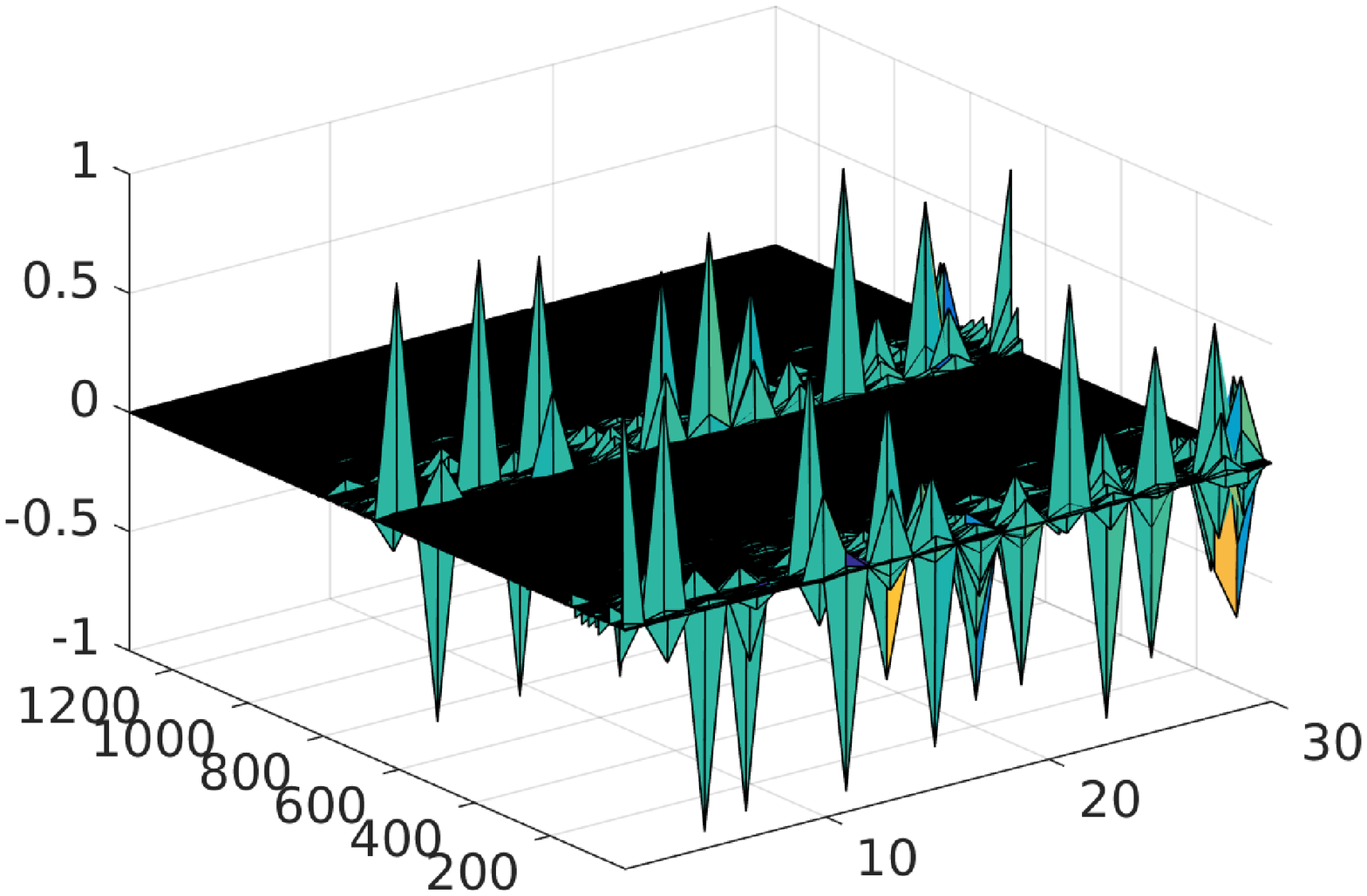} 
\caption{\small Visualizing first $m_0$ BSE eigenvectors  for H$_{32}$ chain and N$_2$H$_4$ 
molecule (right).}
\label{fig:EIGs_vectors}
\end{figure}

This approach is motivated by the numerical consideration (observed for all molecular systems 
considered so far) that eigenvectors corresponding to the central part 
of the spectrum have dominating components supported by rather small part 
of the full index set of size $2 \, N_{ov}$, see Figure \ref{fig:EIGs_vectors} for $m_0=30$. 
Indeed, their effective support is compactly located
at the first ``active'' indexes $\{1,...,N_W\}$ and $\{N_{ov}+1,...,N_W\}$ in the 
respective blocks, where $N_W \ll N_{ov}$.
\begin{figure}[htbp]
\centering
\includegraphics[width=11.0cm]{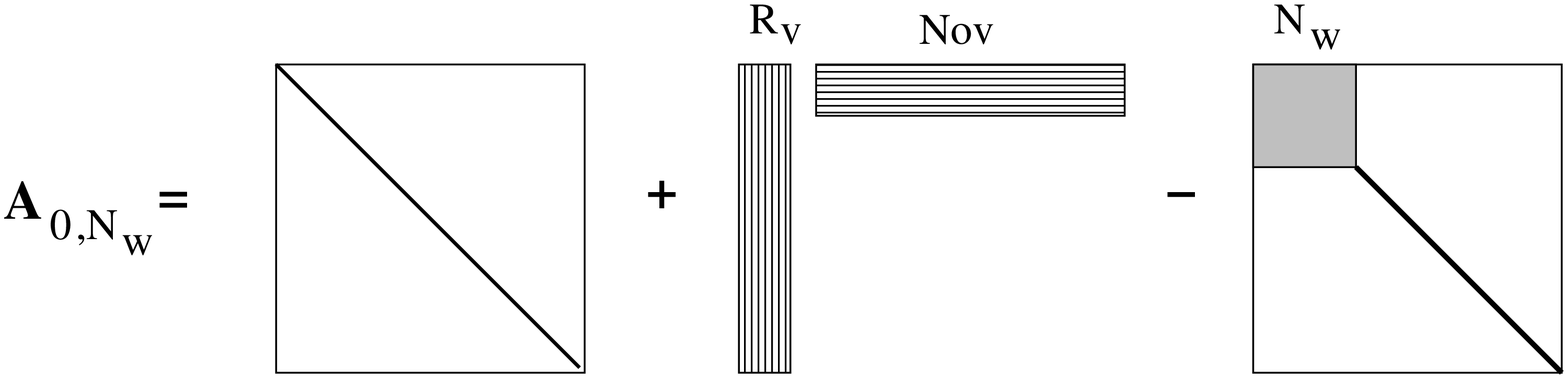} 
\caption{\small Diagonal plus low-rank plus reduced-block structure of the matrix $A_{0,N_W}$.}
\label{fig:Matr_AN}
\end{figure}

We define the selected sub-matrix $\overline{W}_{b}$ in $\overline{W}$, 
by keeping the balance between the storage size for the  
active sub-block $\overline{W}_{b}$ and the storage for the matrix $V$. 
Since the storage and numerical complexity of the rank-$R_V$ matrix $V$ is bounded by 
$2\, R_V \, N_{ov}$, we control the size of restricted $N_W\times N_W$ block 
$\overline{W}_{b}$ by the relation
\begin{equation} \label{eqn:NW_size}
 N_W= C_W \sqrt{2\; R_V \; N_{ov}},
\end{equation}
where the constant $C_W\approx 1$ is close to $1$. 
The approximation error introduced due to the
corresponding matrix truncation can be controlled by the choice of constant $C_W$. 

Keeping the diagonal in the matrix $\overline{W}$ unchanged, we define the simplified  
matrix by $\overline{W} \mapsto W_{N_W}\in \mathbb{R}^{N_{ov}\times N_{ov}}$, where
\begin{equation}
 W_{N_W}(i,j)=\left\{\begin{array}{ll}\overline{W}(i,j), &  i,j\leq N_W \quad \mbox{or} \quad i=j, \quad \mbox{and} \\
0 &  \mbox{otherwise}. \end{array}\right.
\end{equation}
The simplified matrix $A_{N_W}$ is then given by
\begin{equation} \label{eqn:auxil_block_A}
 A \mapsto A_{N_W}:=  \boldsymbol{\Delta \varepsilon} + V  - W_{N_W}, 
\end{equation}
while the modified block $B_0$ remains the same as in (\ref{eqn:BSE-Reduce_Block}).
The corresponding structure of the simplified matrix $A_{N_W}$
is illustrated in Figure \ref{fig:Matr_AN}.
\begin{table}[hbp]
 \begin{center}
 \begin{tabular}
[c]{|c|c|c|c|c| }%
\hline
  $C_W$ $\setminus$ $\varepsilon$   & $0.2 $            & $0.1$            & $0.05$  &
$0.01$    \\
 \hline
  $0.8$             &   $-0.09$; $0.006$ $(148)$  &  $-0.03$; $0.04$ $ (213)$   & 
$-0.008$; $0.014$ $ (284)$   &  $-0.005$; $0.0025$ $ (406)$ \\
 \hline
   $1.0$         &     $-0.1$; $0.05 $ $(185)$   &  $-0.036$; $0.03$ $ (266)$  & 
$-0.015$; $0.0076$ $(355)$ &  $-0.008$; $0.0003$ $(507)$  \\
   \hline
  $1.2$         &     $-0.1$; $0.05 $ $(222)$   &  $-0.04$; $0.02$ $ (320)$   &  
$-0.017$; $0.0038$ $ $ $(426)$  & $N_W=N_{ov}$ \\
   \hline
\end{tabular}
\end{center}
\caption{N$_2$H$_4$: Errors $\overline{\lambda}_1 - \omega_1$; $\overline{\gamma}_1 - \omega_1 $ 
  (in eV),  vs. $\varepsilon$ and $C_W$;  $N_W$ is given in brackets.}
\label{tab:err_N2H4_block}
\end{table}

This construction guaranties that the storage and matrix-vector multiplication complexity
for the simplified matrix block $A_{N_W}$ remains of the same order 
as that for the matrix $V$ characterized by low $\epsilon$-rank.

We modify the auxiliary matrix $F_0\mapsto \overline{F}_0$ in (\ref{eqn:BSE-Reduced}) 
by replacing $A_0 \mapsto A_{N_W}$, which leads to the corrections of 
the eigenvalues $\lambda_n \mapsto \overline{\lambda}_n$ and eigenvectors
$G_1 \mapsto \overline{G}_1=[\psi_1,...,\psi_{m_0}]\in \mathbb{R}^{2N_{ov}\times m_0}$
in the simplified problem, $\overline{F}_0 \psi_n = \overline{\lambda}_n \psi_n$. 
The corresponding eigenvalues $\overline{\gamma}_n$ of the modified reduced system
of the type (\ref{eqn:BSE-Red-Galerk}), specified by the Galerkin matrices
\[
\overline{M}_1= \overline{G}_1^T F_1 \overline{G}_1, 
\quad \overline{S}_1 = \overline{G}_1^T \overline{G}_1 \in \mathbb{R}^{m_0\times m_0},
\]
solve the eigenvalue problem
\begin{equation} \label{eqn:reduced_block_M1}
 \overline{M}_1 \mathbf{q}_n = \overline{\gamma}_n  \overline{S}_1 \mathbf{q}_n, \quad \mathbf{q}_n \in \mathbb{R}^{m_0}.
\end{equation}

Numerical examples below illustrate the approximation error vs. the rank truncation parameter 
$\epsilon>0$ in the reduced basis method characterized 
by the choice of the constant $C_W$ in the simplified matrix $A_{N_W}$ 
described in (\ref{eqn:auxil_block_A}). Spectral data and errors are given in eV.
 \begin{table}[htbp]
 \begin{center}
 \begin{tabular}
[c]{|c|c|c|c|c| }%
\hline
  $C_W$ $\setminus$ $\varepsilon$   & $0.2 $            & $0.1$             & $0.05$   
         & $0.01$    \\
 \hline
  $0.8$           &   $-0.23$; $0.13 $ $(131)$  &  $-0.054$; $0.08 $ $(157)$   & 
$-0.047$; $0.06$ $(168)$   &  $-0.006$; $0.02$ $(200)$ \\
 \hline
   $1.0$       &     $-0.28$; $0.06 $ $(164)$  &  $-0.1$; $0.01 $ $(196)$   & 
$-0.073$; $0.015 $ $(210$ &  $-0.005$; $0.02 $ $(250)$  \\
   \hline
  $1.2$         &     $-0.31$; $0.01 $ $(197)$   &  $-0.1$; $0.01$ $(236)$    & 
$-0.074$; $0.013$ $(251)$  & $-0.001$; $0.005$ $(301)$ \\
   \hline
\end{tabular}
\end{center}
\caption{H$_{16}$ chain: Errors $\overline{\lambda}_1 - \omega_1$; $\overline{\gamma}_1 - \omega_1 $ 
  (in eV),  vs. $\varepsilon$ and $C_W$; $N_W$ is given in brackets. }
\label{tab:err_H16_block}
\end{table}

Tables \ref{tab:err_N2H4_block} (N$_2$H$_4$ molecule) and Table \ref{tab:err_H16_block} 
(H$_{16}$ chain) demonstrate the numerical errors 
$\overline{\lambda}_1 - \omega_1$ and $\overline{\gamma}_1 - \omega_1$ for the 
minimal BSE eigenvalue $\omega_1$
indicating the two-sided error estimates addressed in Remark \ref{rem:bounds_4_Omega} below.
\begin{figure}[htbp]
\centering
\includegraphics[width=7.6cm]{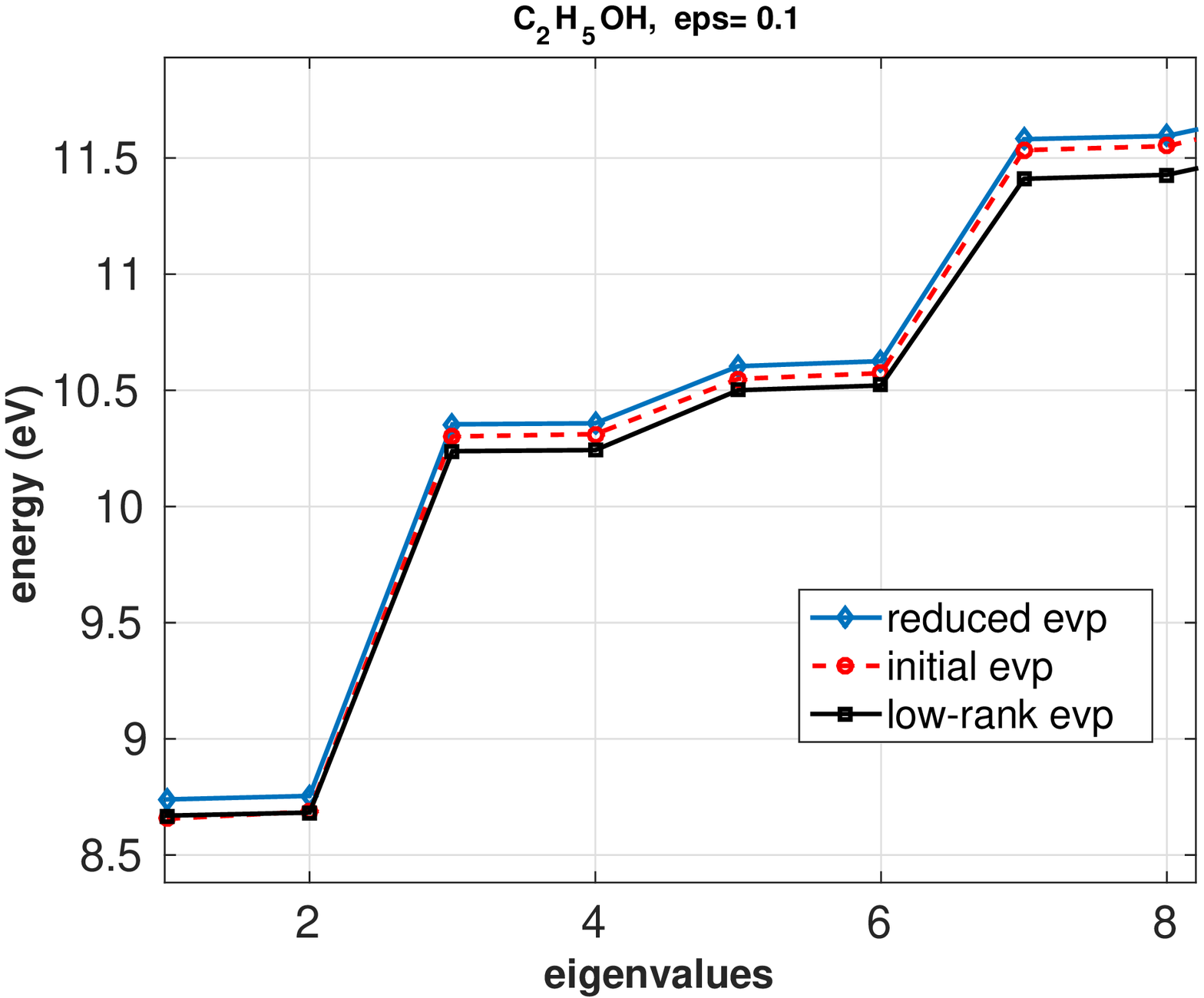}
\includegraphics[width=7.6cm]{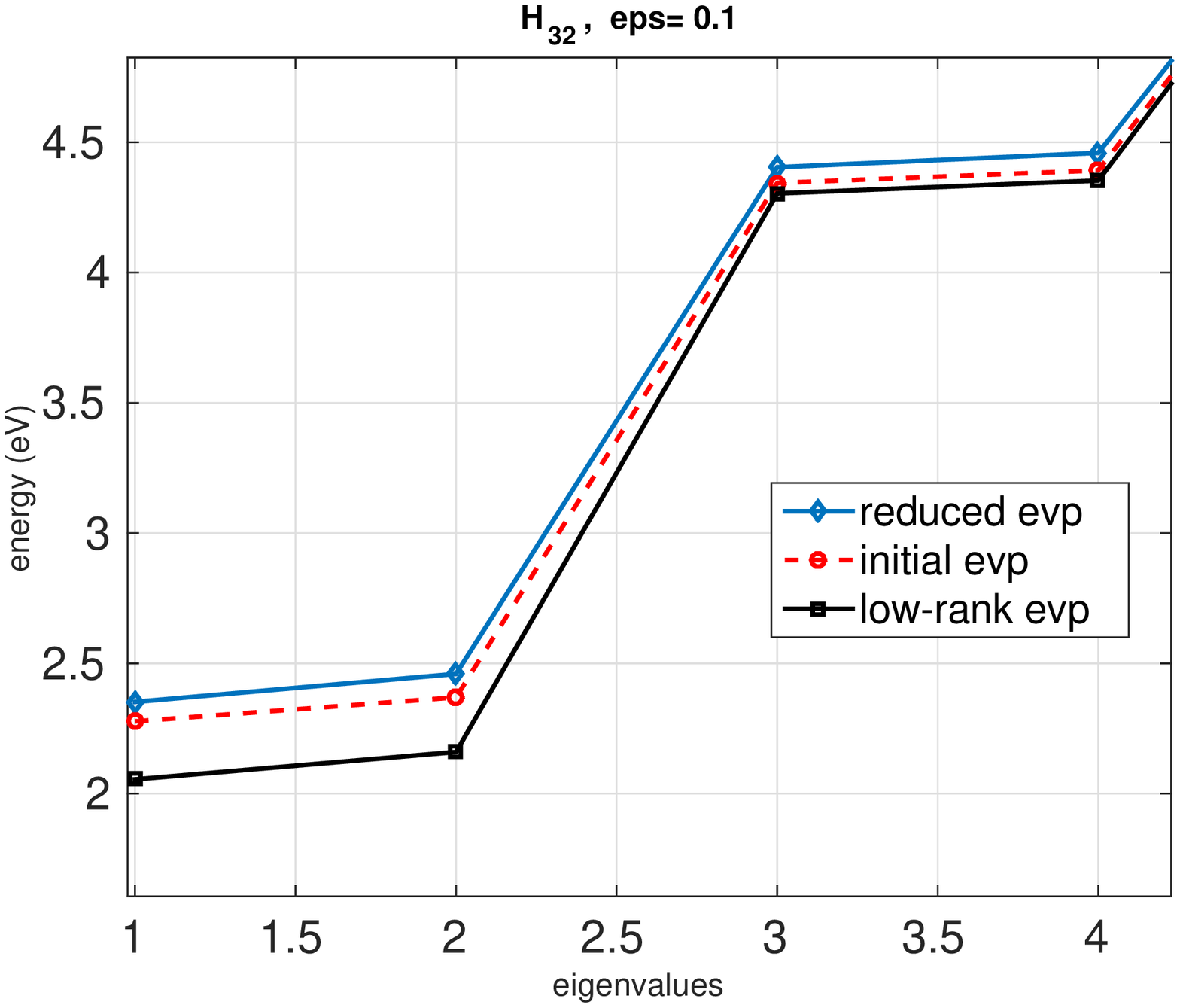} 
\caption{\small Two-sided bounds for the BSE excitation energies for C$_2$H$_5$OH and H$_{32}$ chain.}
\label{fig:EIGs_Bounds}
\end{figure}

\begin{remark}\label{rem:bounds_4_Omega}
It is worth to note that numerical results indicate the important property observed for all 
molecular systems tested so far: the close to zero
eigenvalues $\overline{\lambda}_k$ and $\overline{\gamma}_k$ provide the lower and 
upper bounds for the exact BSE eigenvalues $\omega_k$, i.e.
\[
 \overline{\lambda}_k \leq  \omega_k \leq \overline{\gamma}_k, \quad k=1,2,..., m_0.
\]
\end{remark}

Figure \ref{fig:EIGs_Bounds} demonstrates the two-sided error estimates declared in 
Remark \ref{rem:bounds_4_Omega}.
Here the ``black'' line represents the eigenvalues for the auxiliary problem of the 
type (\ref{eqn:BSE-Reduced}),
but with the modified matrix $\overline{F}_0$, while the blue line represents the eigenvalues of the 
reduced equation (\ref{eqn:reduced_block_M1})  of the type (\ref{eqn:BSE-Red-Galerk}) 
with the Galerkin matrices $\overline{M}_1$ and $\overline{S}_1$.

 Figure \ref{fig:EIGs_Bounds2} represents examples of upper and lower bounds 
for the whole sets of $m_0$ eigenvalues. 

\begin{figure}[htbp]
\centering
\includegraphics[width=7.6cm]{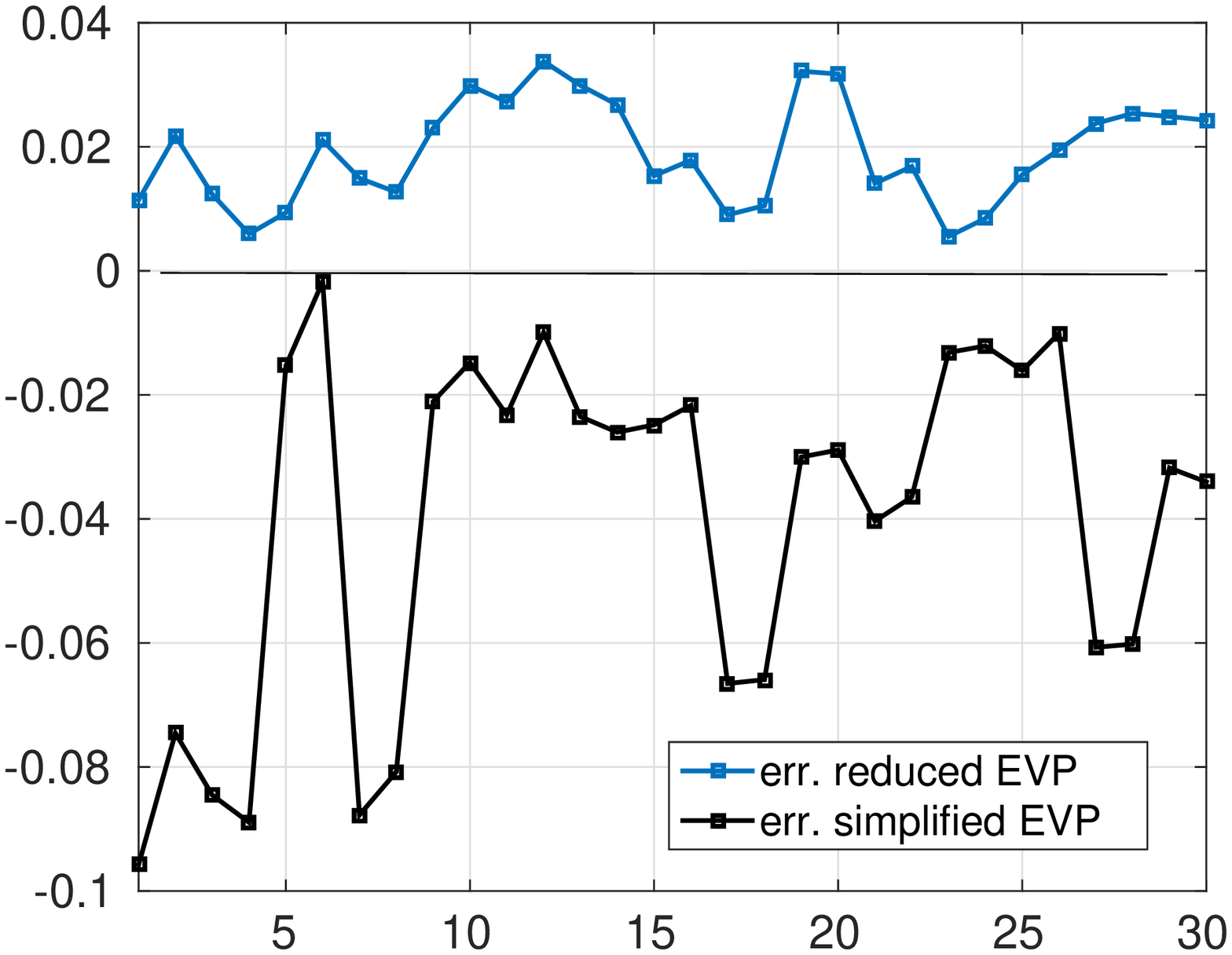}
\includegraphics[width=7.6cm]{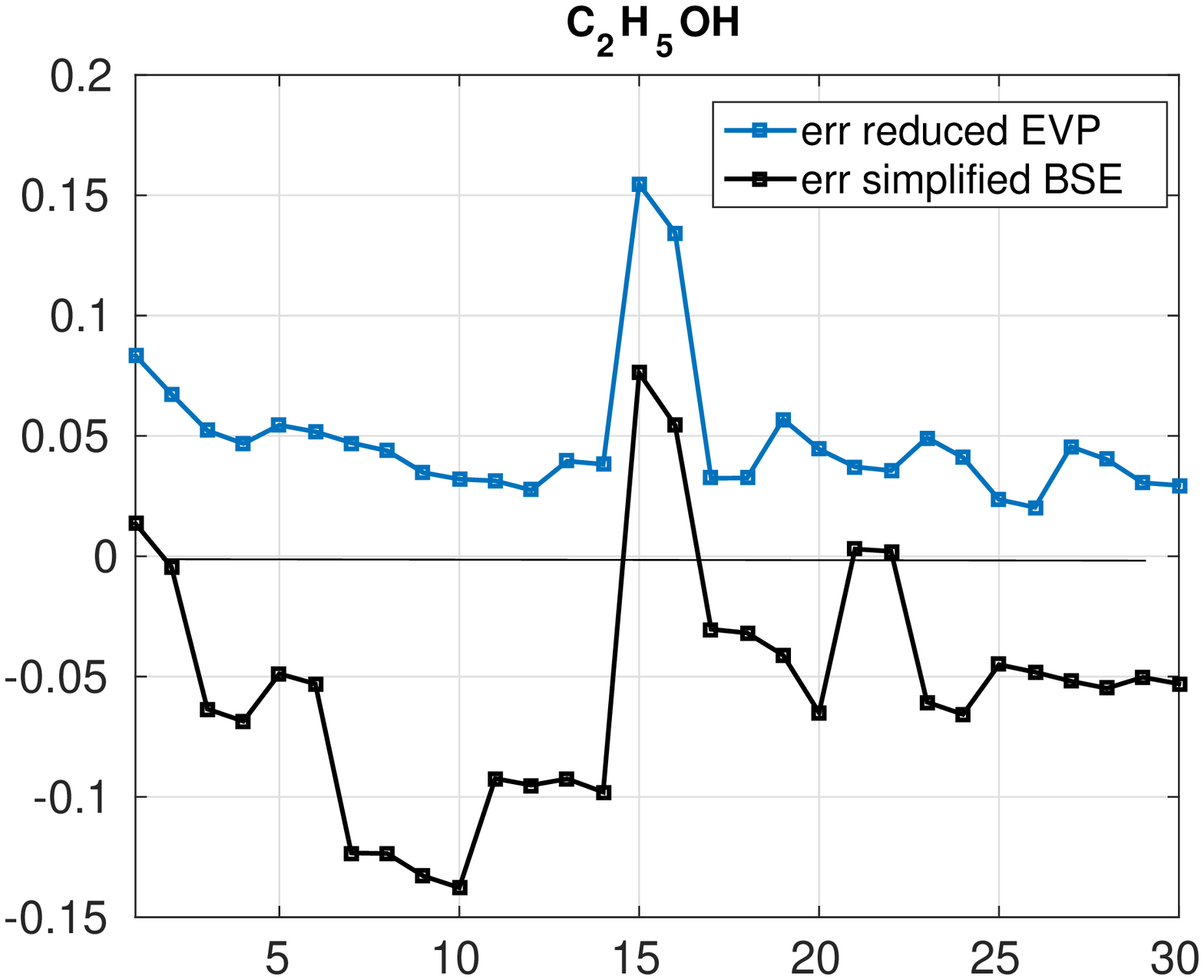} 
\caption{\small The errors  (in eV)  for simplified and reduced BSE eigenvalues for  H$_{16}$ chain 
and C$_2$H$_5$OH molecule (right). Zero level designates the solution of the initial BSE problem.}
\label{fig:EIGs_Bounds2}
\end{figure}

We conclude that the reduced basis approach based on the modified auxiliary matrix $\overline{F}_0$
via reduced-block anzats (\ref{eqn:auxil_block_A}), provides significantly better approximations
$\overline{\gamma}_n$  than that for the initial system with the matrix $F_0$, 
which is noticeable already for $\overline{\lambda}_n$.

\section{Iterative solver for central part of the spectrum} 
\label{sec:Iter_Struct_BSE}

In this section we discuss the construction of iterative solver for partial eigenvalue 
problem in (\ref{eqn:BSE-Reduced}) focusing on rank-structured approximation of 
the matrix inverse $A_0^{-1}$ and $F_0^{-1}$,
further optimization of the sparsity pattern in $\overline{W}$ and on the choice 
of initial guess by using solutions of the TDA model.

\subsection{Inverse iteration for diagonal plus low-rank matrix} \label{ssec:Iter_BSE}

Iterative eigenvalue solvers, such as Lancosh or Jacobi-Davidson methods, are quite efficient 
in approximation of the dominant eigenvalues, 
but may suffer from slow convergence if applied for computation of the smallest or 
intermediate eigenvalues.
We are interested in both of these scenarios.
There are both positive and negative eigenvalues in \eqref{eqn:BSE-Reduced}, and we need 
the few ones with the smallest magnitude.
In the TDA model (\ref{eqn:BSE-Tamm-Danc}), we solve a symmetric positive definite problem
$A_0 {\bf u}= \lambda_n {\bf u}$, 
but again the smallest eigenvalues are required. 

In both cases, the remedy is to invert the system matrix,  so  that the eigenvalues of interest become dominant.
The Matlab interface to ARPACK (procedure \texttt{eigs}) assumes by default that the 
user-defined function solves a linear system with the matrix instead of multiplying it, 
when the smallest eigenvalues are requested.
In our case, we can implement this efficiently, since the matrix consists of an easily 
invertible part (diagonal), plus a low-rank correction, and hence we can use the 
Sherman-Morrison formula.

To shorten the notation, we set up  the rank-$r$ decompositions following (\ref{eqn:RankApprW}), 
$\overline{W}_r = L_{W} L_{W}^\top$, 
$\widetilde{W}_r = Y Z^\top$, and define
\begin{equation}\label{lem:LowRankA0B0}
\begin{array}{lll}
 A_0  &= \boldsymbol{\Delta\varepsilon} +  PQ^\top, &\quad P = \begin{bmatrix}L_V & L_{W}
 \end{bmatrix}, \quad Q = \begin{bmatrix}L_V & -L_{W}\end{bmatrix}, \\[0.5em]
 B_0  &= \Phi \Psi^\top, &\quad \Phi = \begin{bmatrix}L_V & Y\end{bmatrix}, \quad \Psi = 
 \begin{bmatrix}L_V & -Z\end{bmatrix}.
\end{array}
\end{equation}
taking into account (\ref{eqn:L_V_factor}).

\begin{algorithm}[htb]
\caption{Precomputation of parts of $A_0^{-1}$ and $F_0^{-1}$}
\label{alg:sherm}
\begin{algorithmic}[1]
 \REQUIRE $\boldsymbol{\Delta\varepsilon}$ and low-rank factors of $V$, $\overline{W}_r$, 
 $\widetilde{W}_r$ $\eqref{eqn:RankApprW}$.
 \STATE Assemble $P = \begin{bmatrix}L_V & L_{W}\end{bmatrix}$, $Q = \begin{bmatrix}L_V & -L_{W}
 \end{bmatrix}$, $\Phi = \begin{bmatrix}L_V & Y\end{bmatrix}$,  $\Psi = \begin{bmatrix}L_V & -Z
 \end{bmatrix}$.
 \STATE Compute $P_{\varepsilon} = \boldsymbol{\Delta\varepsilon}^{-1}P$, $Q_{\varepsilon} = 
 \boldsymbol{\Delta\varepsilon}^{-1}Q$.
 \STATE Compute $K = (I+Q^\top P_{\varepsilon})^{-1}\in \mathbb{R}^{2r \times 2r}$.
 \STATE Compute $P_{\varepsilon K} = P_{\varepsilon} K$. \COMMENT{Enough for $A_0$}

  \vskip 0.5em
 \STATE Compute $\Phi_{\varepsilon} = \boldsymbol{\Delta\varepsilon}^{-1}\Phi$, 
 $\Psi_{\varepsilon} = \boldsymbol{\Delta\varepsilon}^{-1} \Psi$.
 \STATE Parts of $Q_S$: $\Phi_{\varepsilon P} = \Phi_{\varepsilon}^\top P$, 
 $\Phi_{\varepsilon Q} = Q^\top \Phi_{\varepsilon}$.
 \STATE Assemble $Q_{S \varepsilon} = \begin{bmatrix}Q_{\varepsilon} & \Psi_{\varepsilon} 
 \left(\Phi_{\varepsilon P} K \Phi_{\varepsilon Q} - \Phi^\top \Phi_{\varepsilon}\right)
 \end{bmatrix}$, $ P_{S \varepsilon} =  \begin{bmatrix}P_{\varepsilon} & \Psi_{\varepsilon}
 \end{bmatrix}$.
 \STATE Compute $K_S = (I+\begin{bmatrix}P & \Psi\end{bmatrix}^\top Q_{S \varepsilon})^{-1} 
 \in \mathbb{R}^{4r \times 4r}$
 \STATE Compute $Q_{S \varepsilon K} = Q_{S \varepsilon} K_S$. \COMMENT{For the Schur complement}

  \vskip 0.5em
 \STATE Compute $\Phi_{AB} = \boldsymbol{\Delta\varepsilon}^{-1}\Phi -  P_{\varepsilon K} 
 \left(Q_{\varepsilon}^\top \Phi\right)$. \COMMENT {For $A_0^{-1}B_0$}
\end{algorithmic}
\end{algorithm}

\begin{algorithm}[htb]
\caption{Solution of linear systems with $A_0$ and $F_0$}
\label{alg:sherm-app}
\begin{algorithmic}[1]
\REQUIRE Precomputed matrices $P_{\varepsilon K}, Q_{\varepsilon}, Q_{S \varepsilon K}, P_{S \varepsilon}, 
\Phi_{AB}$ from Alg. \ref{alg:sherm} and $\boldsymbol{\Delta\varepsilon},\Phi,\Psi$.
\ENSURE $\mathbf{\tilde z}=A_0^{-1}\mathbf{u}$ and 
$\begin{bmatrix}\mathbf{z} \\ \mathbf{y}\end{bmatrix}=F_0^{-1}\begin{bmatrix}\mathbf{u} \\ \mathbf{v}\end{bmatrix}$
\STATE Apply the TDA inverse as  $\mathbf{\tilde z} \equiv A_0^{-1}\mathbf{u} = 
\boldsymbol{\Delta\varepsilon}^{-1}\mathbf{u} -  P_{\varepsilon K} \left(Q_{\varepsilon}^\top \mathbf{u}\right)$.

\vskip 0.5em
\STATE Compute $\mathbf{\tilde y} = \mathbf{v} + \Psi \left(\Phi^\top \mathbf{\tilde z}\right)$ $\eqref{eqn:F0inv}$
\STATE Apply the Schur complement $\mathbf{y} \equiv S^{-1}\mathbf{\tilde y} = 
-\boldsymbol{\Delta\varepsilon}^{-1} \mathbf{\tilde y} + Q_{S \varepsilon K} 
 \left(P_{S \varepsilon}^\top \mathbf{\tilde y}\right)$.
\STATE Compute $\mathbf{z} = \mathbf{\tilde z} - \Phi_{AB} \left(\Psi^\top\mathbf{y}\right)$.
\end{algorithmic}
\end{algorithm}

\begin{lemma}\label{lem:EigStructCost} (Complexity of the diagonal plus low-rank approach).
Let the rank parameters in decompositions of $V$, $\overline{W}$ and $\widetilde{W}$ do not exceed $r$.
Then the rank structured representations of inverse matrices $A_0^{-1} $ and $F_0^{-1} $
can be precomputed with the overall cost $\mathcal{O}(N_{ov} r^2)$.
The complexity for each inversion $A_0^{-1} \mathbf{u}$ or $F_0^{-1} \mathbf{w}$ is bounded by 
$\mathcal{O}(N_{ov} r)$. 
\end{lemma}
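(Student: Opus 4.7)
The plan is to track Algorithms~\ref{alg:sherm}--\ref{alg:sherm-app} line by line. Two observations do all the work: (i) $A_0$ is diagonal plus rank-$2r$, so the Sherman--Morrison--Woodbury (SMW) identity inverts it in closed form; (ii) when $F_0^{-1}$ is written in block-LU form, the resulting Schur complement is \emph{again} diagonal plus low-rank, and a second SMW call produces its inverse in the same format. All steps then reduce to BLAS-2/3 products between matrices of sizes $N_{ov}\times\mathcal{O}(r)$ and $\mathcal{O}(r)\times\mathcal{O}(r)$, whose costs are standard.

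\textbf{Step 1 (SMW for $A_0^{-1}$).} From \eqref{lem:LowRankA0B0}, $A_0=\boldsymbol{\Delta\varepsilon}+PQ^\top$ with $P,Q\in\mathbb{R}^{N_{ov}\times 2r}$. SMW gives
\[
A_0^{-1} = \boldsymbol{\Delta\varepsilon}^{-1} - P_{\varepsilon K}\,Q_\varepsilon^\top,
\qquad K=(I+Q^\top P_\varepsilon)^{-1}\in\mathbb{R}^{2r\times 2r},
\]
which is exactly the form precomputed in Lines~1--4 of Algorithm~\ref{alg:sherm}. Because $\boldsymbol{\Delta\varepsilon}$ is diagonal, $P_\varepsilon,Q_\varepsilon$ cost $\mathcal{O}(N_{ov}r)$; forming $Q^\top P_\varepsilon$ costs $\mathcal{O}(N_{ov}r^2)$ and inverting it $\mathcal{O}(r^3)$; assembling $P_{\varepsilon K}$ needs another $\mathcal{O}(N_{ov}r^2)$. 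A subsequent application $A_0^{-1}\mathbf{u}$ is a diagonal solve plus two thin mat-vecs against $N_{ov}\times 2r$ factors, i.e.\ $\mathcal{O}(N_{ov}r)$.

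\textbf{Step 2 (block-LU and second SMW for $F_0^{-1}$).} The $2\times 2$ block factorisation of $F_0$ produces, with Schur complement $S := -A_0^\ast + B_0^\ast A_0^{-1}B_0$,
\[
\mathbf{y}=S^{-1}\bigl(\mathbf{v}+B_0^\ast A_0^{-1}\mathbf{u}\bigr),
\qquad
\mathbf{z}=A_0^{-1}\mathbf{u}-(A_0^{-1}B_0)\,\mathbf{y}.
\]
Inserting $B_0=\Phi\Psi^\top$ and the factored $A_0^{-1}$ from Step~1, the rank-$2r$ factors of $B_0^\ast$ and $B_0$ contract around $A_0^{-1}$ to a \emph{small} core $M:=\Phi^\ast A_0^{-1}\Phi\in\mathbb{R}^{2r\times 2r}$, so that
\[
S = -\boldsymbol{\Delta\varepsilon}^\ast + \widetilde{P}\widetilde{Q}^\top,
\qquad \widetilde{P},\widetilde{Q}\in\mathbb{R}^{N_{ov}\times 4r},
\]
is once more diagonal plus a low-rank perturbation, now of rank at most $4r$. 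A second SMW call then produces $S^{-1}=-\boldsymbol{\Delta\varepsilon}^{-\ast}+Q_{S\varepsilon K}P_{S\varepsilon}^\top$ with a $4r\times 4r$ core $K_S$, matching Lines~5--9 of Algorithm~\ref{alg:sherm}. Computing $M$ requires $2r$ applications of $A_0^{-1}$ to the columns of $\Phi$, i.e.\ $\mathcal{O}(N_{ov}r^2)$; the remaining lines each perform either a batched $A_0^{-1}$-action, a thin BLAS-3 product of shape $(N_{ov}\times\mathcal{O}(r))\cdot(\mathcal{O}(r)\times\mathcal{O}(r))$, or an inversion of an $\mathcal{O}(r)\times\mathcal{O}(r)$ matrix. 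Each line therefore costs at most $\mathcal{O}(N_{ov}r^2)+\mathcal{O}(r^3)$, so the total precomputation is $\mathcal{O}(N_{ov}r^2)$.

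\textbf{Step 3 (per-application cost for $F_0^{-1}$).} The four lines of Algorithm~\ref{alg:sherm-app} consist of one $A_0^{-1}$-action (Step~1), one $S^{-1}$-action (same pattern with $4r$ columns), and two thin mat-vecs against the precomputed $N_{ov}\times\mathcal{O}(r)$ factors $\Phi,\Psi,\Phi_{AB}$. Each touches $N_{ov}$ coefficients against at most $4r$ columns, summing to $\mathcal{O}(N_{ov}r)$.

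\textbf{Main obstacle.} The only non-bookkeeping point is in Step~2: one must verify that inserting the rank-$2r$ operator $A_0^{-1}$ between the two rank-$2r$ factors of $B_0$ does \emph{not} blow the rank of $S$ up to $\mathcal{O}(r^2)$. The saving is that the inner contraction $\Phi^\ast A_0^{-1}\Phi$ collapses to the small $2r\times 2r$ matrix $M$, which is absorbed as an inner factor and never materialised as a full $N_{ov}\times N_{ov}$ object. Once this rank identity is made explicit, the line-by-line operation counts and the advertised complexities $\mathcal{O}(N_{ov}r^2)$ and $\mathcal{O}(N_{ov}r)$ follow immediately from dense BLAS counts.
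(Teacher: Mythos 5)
Your proposal is correct and follows essentially the same route as the paper: Sherman--Morrison--Woodbury for the diagonal-plus-rank-$2r$ block $A_0$, a block-LU factorization of $F_0$, the observation that the Schur complement $S=-A_0^\top+B_0^\top A_0^{-1}B_0$ collapses to diagonal plus rank $4r$ because $\Phi^\top A_0^{-1}\Phi$ is a small $2r\times 2r$ core, and a second SMW call for $S^{-1}$, with the identical operation counts $\mathcal{O}(N_{ov}r^2)$ for setup and $\mathcal{O}(N_{ov}r)$ per application. The "main obstacle" you flag is exactly the point the paper handles in its equation for $Q_S$, so nothing is missing.
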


\begin{proof}
We begin with the TDA model (\ref{eqn:BSE-Tamm-Danc}).
The Sherman-Morrison formula for $A_0$ in (\ref{lem:LowRankA0B0}) reads
\begin{equation}
 A_0^{-1} = \boldsymbol{\Delta\varepsilon}^{-1} - \boldsymbol{\Delta\varepsilon}^{-1} P 
 \left(I + Q^\top \boldsymbol{\Delta\varepsilon}^{-1} P\right)^{-1} Q^\top 
 \boldsymbol{\Delta\varepsilon}^{-1}.
\label{eqn:A_0_sherm}
\end{equation}
Here the inner $2r \times 2r$ matrix $K=\left(I + Q^\top \boldsymbol{\Delta\varepsilon}^{-1} P\right)^{-1}$ 
is small and can be computed explicitly at the expense  $\mathcal{O}(r^3 + r^2 N_{ov})$.
Hence matrix-vector product $A_0^{-1} \mathbf{u}_n$ requires multiplication by the diagonal 
matrix $\boldsymbol{\Delta\varepsilon}^{-1}$ and the low-rank matrix in the second summand.
This amounts to the overall cost $\mathcal{O}(N_{ov} r)$.

To invert $F_0$, we first derive its LU decomposition.
One can verify that 
\begin{equation}
F_0 = \begin{bmatrix} A_0 & B_0 \\ -B_0^\top & -A_0^\top \end{bmatrix} = 
\begin{bmatrix} A_0 & 0 \\ -B_0^\top & I \end{bmatrix} \begin{bmatrix} 
I & A_0^{-1} B_0 \\ 0 & S \end{bmatrix}, \qquad S = -A_0^\top + B_0^\top A_0^{-1} B_0.
\end{equation}
To solve a system $F_0 \begin{bmatrix}\mathbf{z} \\ \mathbf{y}\end{bmatrix} = 
\begin{bmatrix}\mathbf{u} \\ \mathbf{v}\end{bmatrix}$, 
we need one action of $A_0^{-1}$ and inverse of the Schur complement $S^{-1}$.
Indeed,
\begin{equation}\label{eqn:F0inv}
 \begin{split}
 \mathbf{\tilde z} & = A_0^{-1} \mathbf{u}, \quad \mathbf{\tilde y} = \mathbf{v} + B_0^\top \mathbf{\tilde z}, \\
 \mathbf{y} & = S^{-1} \mathbf{\tilde y}, \quad \mathbf{z} = \mathbf{\tilde z} - A_0^{-1} B_0 \mathbf{y}.
 \end{split}
\end{equation}
Note that $A_0^{-1} B_0$ is a low-rank matrix and can be precomputed in advance.
The action of $A_0^{-1}$ is given by \eqref{eqn:A_0_sherm}, so we address now the 
inversion of the Schur complement.

Plugging \eqref{eqn:A_0_sherm} into $S$, we obtain
$$
S = -\boldsymbol{\Delta\varepsilon} - QP^\top + \Psi \Phi^\top A_0^{-1} \Phi \Psi^\top 
= - (\boldsymbol{\Delta\varepsilon} + Q_S P_S^\top), 
$$
where
\begin{equation}\label{eqn:Qs_inSchur}
 Q_S = \begin{bmatrix} Q & \Psi  \left(\Phi^\top \boldsymbol{\Delta\varepsilon}^{-1} P K Q^\top 
 \boldsymbol{\Delta\varepsilon}^{-1} \Phi - \Phi^\top \boldsymbol{\Delta\varepsilon}^{-1} \Phi \right) 
 \end{bmatrix}, \quad P_S =  \begin{bmatrix}P & \Psi\end{bmatrix}.
\end{equation}
Therefore, 
\begin{equation}\label{eqn:Schur_lowR}
 S^{-1} = -\left(\boldsymbol{\Delta\varepsilon}^{-1} - 
 \boldsymbol{\Delta\varepsilon}^{-1} Q_S K_S P_S^\top \boldsymbol{\Delta\varepsilon}^{-1}\right), 
 \quad K_S = \left(I + P_S^\top \boldsymbol{\Delta\varepsilon}^{-1} Q_S\right)^{-1}.
\end{equation}
Keeping intermediate results in these calculations, we can trade off the memory against the CPU time.
The computational cost of (\ref{eqn:Qs_inSchur}) and then (\ref{eqn:Schur_lowR}) is again bounded by 
$\mathcal{O}(r^2 N_{ov})$, while the implementation of (\ref{eqn:F0inv}) takes $\mathcal{O}(r N_{ov})$ operations.
\end{proof}

{Lemma \ref{lem:EigStructCost} indicates that for both BSE and TDA models
the asymptotic complexity for one iterative step is of the same order.
Precomputation of intermediate matrices is described in Algorithm \ref{alg:sherm}, 
and their use in the structured matrix inversion is shown in Algorithm \ref{alg:sherm-app}.}

Table \ref{tab:times_TDA} compares CPU times (sec) for full \texttt{eig} and rank-structured iteration for 
TDA problem (\ref{eqn:BSE-Tamm-Danc}) in Matlab implementation. 
Rank-truncation threshold is $\varepsilon = 0.1$, the number of computed eigenvalues is $m_0=30$.
{Bottom line shows CPU times (sec) of the \texttt{eigs} procedure applied with the inverse 
matrix-vector product $A_0^{-1}\mathbf{u}$ using Algorithm \ref{alg:sherm-app} (marked by "inv").
The other lines show results of the corresponding algorithms which used traditional 
product $A_0\mathbf{u}$ ($A_0$ in the low-rank form). Notice that the results for Matlab 
version of LOBPCG by \cite{knyazev-lobpcg-2001} are presented for comparison.
We see that the inverse-based method is superior in all tests.}
\begin{table}[htbp]
 \begin{center}
 \begin{tabular}
[c]{|c|c|c|c|c|c|c|c| }%
\hline
Molecular syst.& H$_2$O & N$_2$H$_4$ & C$_2$H$_5$OH & H$_{32}$  & C$_2$H$_5$ NO$_2$ & H$_{48}$ & C$_3$H$_7$ NO$_2$ \\
         \hline\hline
 TDA  size & $180^2$   & $657^2$ & $1430^2$   &  $1792^2$    & $3000^2$  & $4032^2$ & $4488^2$   \\
\hline \hline
\texttt{eig}($A_0$)      & $0.02$   & $0.5$   & $4.3$      & $9.8$  & $37.6$ & $ 91$ &  $127.4$   \\
\hline
\texttt{lobpcg}($A_0$)  & $0.22$   & $0.6$    & $5.4$    & $2.77 $  & $18.2$ & $5.6$ &  $34.2$   \\
\hline
\texttt{eigs}($A_0$)      & $0.07$   & $0.29$   & $1.7$      & $0.49$  & $12$ & $2.7$ &  $21$   \\
\hline
\texttt{eigs}(inv($A_0$)) & $0.05$   & $0.08$   & $0.17$    & $0.11$  & $0.32$ & $0.34 $ &  $0.5$   \\
\hline
 \end{tabular}
\end{center}
\caption{Times (s) for eigensolvers applied to TDA matrix. }
\label{tab:times_TDA}
\end{table}
 \begin{remark} 
Notice that the initial guess for the subspace iteration applied to the full BSE can be constructed,
{replicating the eigenvectors computed in the TDA model.}
It provides rather accurate approximation to the exact eigenvectors for the initial BSE system (\ref{eqn:BSE-F1}).
 In \cite{BeKhKh_BSE:15} it was shown numerically that the TDA approximation error 
 $|\mu_n - \omega_n|$ of the order of $10^{-2}$ eV is achieved 
 for compact and extended molecules presented in Table \ref{tab:times_TDA}.
 \end{remark}

Table \ref{tab:times_BSE} compares CPU times (sec) for EIG-solver and rank-structured EIGS-iteration  
applied to the full BSE problem (\ref{eqn:BSE-F1}). 
\begin{table}[htbp]
  \begin{center}
  \begin{tabular}
[c]{|c|c|c|c|c|c|c|c|}%
\hline
Molecular syst.        & H$_2$O    & N$_2$H$_4$  & C$_2$H$_5$OH & H$_{32}$    &C$_2$H$_5$ NO$_2$ & H$_{48}$ 
&  C$_3$H$_7$ NO$_2$ \\
  \hline
 $ N_{o}$, $N_{b}$& $5$, $41$ & $9$ , $82$  & $13$, $123$  & $16$, $128$ & $20$, $170$ & $24$, $192 $  
 & $24$, $211$   \\
\hline
BSE matrix size & $360^2$  &  $1314^2$ & $2860^2$    & $3584^2$  & $6000^2$  & $8064^2$ & $8976^2$   \\
\hline
\texttt{eig}($F_0$)  & $0.08$    & $4.2$      & $33.7$    & $68.1$  & $274$  & $649$ & $903$   \\
\hline
\texttt{eigs}($F_0$)  & $0.13$     & $0.28$   & $0.7$   & $0.77$  & $2.2$  & $2.3$ & $3.9$   \\
\hline
\end{tabular}
\end{center}
\caption{Times (s) for the full BSE matrix $F_0$. }
\label{tab:times_BSE}
\end{table}

\subsection{Inversion of the block-sparse matrices} \label{ssec:Iter_BSE_BlSp}

If ${W}_{N_W}$ is kept in the block-diagonal form as in (\ref{eqn:auxil_block_A}), 
its inversion remains still easy similar to the case (\ref{eqn:BSE-Reduce_Block}).
Basically, we can use the same Sherman-Morrison scheme from Algorithms \ref{alg:sherm}, and \ref{alg:sherm-app}.
To that end, we aggregate $\boldsymbol{\Delta\varepsilon}_W = 
\boldsymbol{\Delta\varepsilon} - {W}_{N_W}$, while in the low-rank factors the 
only $P=Q=L_V$ remains.
After that, all calculations in Algorithms \ref{alg:sherm} and \ref{alg:sherm-app} are repeated unchanged, replacing 
all $\boldsymbol{\Delta\varepsilon}$ by $\boldsymbol{\Delta\varepsilon}_W$, where the latter is 
now a block-diagonal matrix.

The particular modifications for the enhanced algorithm are as follows. 
Let us split
$\boldsymbol{\Delta\varepsilon} = \mathrm{blockdiag}(\boldsymbol{\Delta\varepsilon}_1, 
\boldsymbol{\Delta\varepsilon}_2)$,
where $\boldsymbol{\Delta\varepsilon}_1$ has the size $N_W$, and 
$\boldsymbol{\Delta\varepsilon}_2 \in \mathbb{R}^{N_W' \times N_W'}$ with $N_W'=N_{ov}-N_W$
represents the remaining values.
The same applies to $W_{N_W} = \mathrm{blockdiag}(W_b, \mathrm{diag}(w_2))$, 
where $w_2$ contains the elements on the diagonal of $W_{N_W}$ which do not belong to $W_b$.
Then the implementation of the matrix inverse
\begin{equation}\label{eqn:DepsInv}
\boldsymbol{\Delta\varepsilon}_W^{-1} = 
\mathrm{blockdiag}((\boldsymbol{\Delta\varepsilon}_1 - W_b)^{-1}, 
(\boldsymbol{\Delta\varepsilon}_2 - \mathrm{diag}(w_2))^{-1})
\end{equation}
requires inversion of an $N_W \times N_W$ dense matrix, and a diagonal matrix of size $N_W'=N_{ov}-N_W$.
Since $N_W$ is chosen small, the complexity of this operation is moderate.
Now all steps requiring multiplication with $\boldsymbol{\Delta\varepsilon}$ in  
Algorithm \ref{alg:sherm}--\ref{alg:sherm-app} can be substituted by (\ref{eqn:DepsInv}).
Numerical complexity of the new inversion scheme is estimated in the next lemma.

\begin{table}[htbp]
 \begin{center}
 \begin{tabular}
[c]{|c|c|c|c|c|c|c|c| }%
\hline
Molecular syst.& H$_2$O & N$_2$H$_4$ & C$_2$H$_5$OH & H$_{32}$  & C$_2$H$_5$ NO$_2$ & H$_{48}$ 
& C$_3$H$_7$ NO$_2$ \\
 \hline\hline
 TDA  size & $180^2$   & $657^2$ & $1430^2$   &  $1792^2$    & $3000^2$  & $4032^2$ & $4488^2$   \\
\hline
TDA: \texttt{eigs}(block($A_0$)) & $0.09$ & $0.33$   & $2.8$  & $0.77$  & $16.1$ & $3.0$ &  $30$   \\
\hline
TDA: \texttt{eigs}(block-inv($A_0$)) & $0.07$ & $0.09$   & $0.25$  & $0.77$    & $0.54$ & $3.0$ &  $1.0$   \\
\hline\hline
BSE: \texttt{eigs}(inv(block($F_0$)))  & $0.21$     & $0.37$   & $1.11$   & $1.10$  & $2.4$  & $2.92$ & $4.6$   \\
\hline
BSE vs. $\overline{F_0}$: $|\overline{\gamma}_1-\omega_1 | $ & $0.02$ & $0.03$ & $0.08$ & $0.07$  
& $0.05$ & $0.10$ &  $0.1$   \\
\hline
\hline
 \end{tabular}
\end{center}
\caption{Block-sparse matrices: times (s) for eigensolvers applied to TDA and BSE systems. 
 Bottom line shows the error (eV) for the case of block-sparse approximation to 
 the diagonal matrix block $A_0$, $\varepsilon = 0.1$.}
\label{tab:times_TDABSE_Blsp}
\end{table}

\begin{lemma}\label{lem:BlSp_Compl} (Complexity of the reduced-block algorithm).
 Suppose that the rank parameters in the decomposition of $V$ and $\widetilde{W}$ 
 do not exceed $r$ and  the block-size $N_W$ is chosen from the equation (\ref{eqn:NW_size}).
 
 Then the rank structured plus reduced-block representations of inverse matrices 
 $A_0^{-1} $ and $F_0^{-1} $
can be set up with the overall cost $\mathcal{O}(N_{ov}^{3/2} r^{3/2} + N_{ov} r^{2})$.
The complexity of each inversion $A_0^{-1} \mathbf{u}$ or $F_0^{-1} \mathbf{w}$ is bounded by $\mathcal{O}(N_{ov} r)$. 
\end{lemma}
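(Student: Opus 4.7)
The plan is to recycle the Sherman--Morrison analysis of Lemma \ref{lem:EigStructCost} almost verbatim, the only real change being that the role of the trivially invertible $\boldsymbol{\Delta\varepsilon}$ is now played by $\boldsymbol{\Delta\varepsilon}_W = \boldsymbol{\Delta\varepsilon} - W_{N_W}$. By (\ref{eqn:DepsInv}) this matrix is block-diagonal: a dense $N_W \times N_W$ top block and a diagonal matrix of size $N_{ov} - N_W$. Once a factorization of $\boldsymbol{\Delta\varepsilon}_W$ is in hand, every occurrence of $\boldsymbol{\Delta\varepsilon}^{-1}$ in Algorithms \ref{alg:sherm}--\ref{alg:sherm-app} is simply replaced by $\boldsymbol{\Delta\varepsilon}_W^{-1}$, and since $\overline{W}$ is absorbed into this block-diagonal part, the low-rank factors collapse to $P = Q = L_V$ of rank at most $r$, while $\Phi,\Psi$ still have rank at most $2r$.

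First I would bound the one-time cost of the inverse $(\boldsymbol{\Delta\varepsilon}_1 - W_b)^{-1}$. This is a dense $N_W \times N_W$ linear algebra problem (LU or Cholesky factorization), costing $\mathcal{O}(N_W^3)$. Inverting the diagonal tail is $\mathcal{O}(N_{ov})$. Substituting the balancing choice (\ref{eqn:NW_size}), i.e.\ $N_W^2 = \mathcal{O}(r N_{ov})$, yields $N_W^3 = N_W \cdot N_W^2 = \mathcal{O}(\sqrt{r N_{ov}} \cdot r N_{ov}) = \mathcal{O}(N_{ov}^{3/2} r^{3/2})$.

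Next I would carry over the Sherman--Morrison assembly steps (computation of $P_\varepsilon, Q_\varepsilon, K, \Phi_\varepsilon, \Psi_\varepsilon, Q_S, K_S, \Phi_{AB}$). Each of these consists of applying $\boldsymbol{\Delta\varepsilon}_W^{-1}$ to $\mathcal{O}(r)$ columns and forming a handful of small $\mathcal{O}(r) \times \mathcal{O}(r)$ matrices. One application of $\boldsymbol{\Delta\varepsilon}_W^{-1}$ to a vector now costs $\mathcal{O}(N_W^2 + N_{ov}) = \mathcal{O}(N_{ov} r)$: the dense back-substitution in the top block dominates. Thus the assembly stage adds $\mathcal{O}(r \cdot N_{ov} r) + \mathcal{O}(r^3) = \mathcal{O}(N_{ov} r^2)$ to the dense-block cost. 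Summing the two contributions gives the advertised setup complexity $\mathcal{O}(N_{ov}^{3/2} r^{3/2} + N_{ov} r^2)$.

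Finally, for the per-inversion bound, one application of $A_0^{-1}$ via (\ref{eqn:A_0_sherm}) (with $\boldsymbol{\Delta\varepsilon}_W$ in place of $\boldsymbol{\Delta\varepsilon}$) requires one multiplication by $\boldsymbol{\Delta\varepsilon}_W^{-1}$, costing $\mathcal{O}(N_W^2 + N_{ov}) = \mathcal{O}(N_{ov} r)$, together with two low-rank products of cost $\mathcal{O}(N_{ov} r)$; the same holds for $F_0^{-1}$ via (\ref{eqn:F0inv}) and the Schur-complement formula (\ref{eqn:Schur_lowR}). All terms remain $\mathcal{O}(N_{ov} r)$. The only subtle point, and therefore the step I would write out most carefully, is verifying that the balancing identity (\ref{eqn:NW_size}) is precisely what prevents the dense top-block work $N_W^3$ in the setup and $N_W^2$ per application from exceeding the low-rank contributions; anything larger than $N_W = \mathcal{O}(\sqrt{N_{ov} r})$ would break the stated bounds, while the approximation-theoretic benefits demand $N_W$ not be too small, which is exactly the trade-off fixed by $C_W \approx 1$.
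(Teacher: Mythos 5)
Your proposal is correct and follows essentially the same route as the paper's proof: both absorb $W_{N_W}$ into a block-diagonal $\boldsymbol{\Delta\varepsilon}_W$, charge $\mathcal{O}(N_W^3)=\mathcal{O}(N_{ov}^{3/2}r^{3/2})$ for the one-time dense-block inversion via the balancing condition (\ref{eqn:NW_size}), bound the Sherman--Morrison assembly by $\mathcal{O}(N_{ov}r^2)$ through $N_W^2=\mathcal{O}(N_{ov}r)$, and obtain $\mathcal{O}(N_{ov}r)$ per matrix-vector application. The paper's own argument is terser but identical in substance, deferring the unchanged steps to Lemma \ref{lem:EigStructCost} exactly as you do.
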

\begin{proof}
Inversion of the $N_W \times N_W$ dense block in \eqref{eqn:DepsInv} requires $\mathcal{O}(N_W^3)$ operations.
Hence, the condition (\ref{eqn:NW_size}) ensures that the cost of setting up the matrix 
(\ref{eqn:DepsInv}) is bounded by $\mathcal{O}(N_{ov}^{3/2} r^{3/2})$. 
{After that, multiplication of \eqref{eqn:DepsInv} by a $N_{ov} \times r$ matrix 
(e.g. in Line 2 of Alg. \ref{alg:sherm}) requires $\mathcal{O}(N_W^2 r + N_W'r) = \mathcal{O}(N_{ov}(r^2+r))$ operations.
In Alg. \ref{alg:sherm-app}, multiplication of \eqref{eqn:DepsInv} by a vector 
is performed with the $\mathcal{O}(N_W^2 + N_W') = \mathcal{O}(N_{ov}r)$ cost.
Complexity of the other steps is the same as in Lemma \ref{lem:EigStructCost}.
}
\end{proof}

Numerical illustrations for the enhanced data sparsity are presented in Table \ref{tab:times_TDABSE_Blsp}.

Notice that the performance of the low-rank 
and block-sparse solvers is comparable, but the second one provides the better sparsity and 
higher accuracy in eigenvalues, see \S\ref{sec:Iter_RedBlock_BSE}.
Remarkable that the most advanced version of the approach, based on the inverse iteration applied
to the diagonal plus low-rank plus reduced-block approximation, outperforms the
full eigenvalue solver on several orders of magnitude.

\section{Solving BSE spectral problem in the QTT format }\label{sec:QTT_BSE}

\subsection{Rank-structured representation of multi-dimensional tensors}\label{ssec:Tensor_Form}

A real tensor of order $d$ is defined as an element of finite dimensional Hilbert space
$\mathbb{W}_{\bf m} = \mathbb{R}^{I_1\times ... \times I_d}$
composed of the $d$-fold, $M_1\times ... \times M_d$ real-valued arrays (tensors), where
${\bf m}=(M_1,\ldots ,M_d)$, and $I_\ell:=\{1,...,M_\ell\}$, $\ell=1,...,d$.
A tensor ${\bf{A}}\in \mathbb{R}^{I_1\times ... \times I_d}$ 
is represented entry-wise by
\[
{\bf{A}}=[{a}(i_1,...,i_d)]\equiv [{a}(\mathbf{i})]\equiv [{a}_{i_1,...,i_d}]\equiv [{a}_\mathbf{i}]
\quad \mbox{with}\quad 
\mathbf{i}\in{\cal I}=I_1\times ... \times I_d.
\]
The Euclidean scalar product, $\left\langle \cdot ,\cdot \right\rangle :
\mathbb{W}_{\bf m}\times \mathbb{W}_{\bf m}\to \mathbb{R}$,
is defined by
$$
\left\langle {\textbf{A}} , {\textbf{B}} \right\rangle:=
\sum_{{\bf i}\in {\cal I}} {a}_{\bf i} {b}_{\bf i}, \quad
{\bf{A}},{\bf{B}}\in \mathbb{W}_{\bf m}.
$$ 
The storage size for a $d$th order tensor scales exponentially in $d$, 
$\operatorname{dim}(\mathbb{W}_{{\bf m}})=M_1\cdots M_d$, that causes the so-called ``curse of dimensionality''.
In this section, for ease of presentation  we assume $M_\ell=M$ for $\ell=1,...,d$.

The efficient low-parametric representations of $d$th order tensors can be realized by using
low-rank separable decompositions (formats). 
The commonly used canonical and Tucker tensor formats \cite{KoldaB:07} 
are constructed by linear combination of
the simplest separable elements given by rank-$1$ tensors,
\[
{\bf U} = {\bf u}^{(1)}\otimes ... \otimes {\bf u}^{(d)}\in 
\mathbb{R}^{I_1 \times \ldots \times I_d},
\quad {\bf u}^{(\ell)}\in \mathbb{R}^{M_\ell},
\]
with entries $u_{i_1,\ldots i_d}= u^{(1)}_{i_1}\cdot \cdot\cdot u^{(d)}_{i_d}$, 
which can be stored by $d M$ numbers.

Tensor-structured numerical methods for PDEs were particularly initiated by employment of the 
canonical and Tucker tensor formats in grid based ``ab initio`` electronic structure calculations,
namely, for accurate evaluation of the 3D convolution integrals with the Newton kernel, 
see \cite{VeKhorTromsoe:15} and references therein. The literature overview on multi-linear algebra
and tensor numerical methods for PDEs  
can be found, for example, in \cite{KoldaB:07,khor-survey-2014,GraKresTo:13,dc-phd,VeKhorTromsoe:15}. 

In this paper we apply the factorized representation of $d$th order tensors in the 
tensor train (TT) format \cite{ot-tt-2009}, which is the particular case of 
the {\it matrix product states} (MPS) decomposition 
\cite{white-dmrg-1993,Vidal-Effic-Simul-Quant-comput-2003,Scholl:11}. 
The latter was introduced
long since in the physics community and successfully applied in quantum 
chemistry computations and in spin systems modeling.
For a given rank parameter ${\bf r}=(r_1,...,r_{d-1})$, and the respective
index sets $J_\ell=\{1,...,r_\ell\}$ ($\ell=1,...,d-1$),
the rank-${\bf r}$ TT format contains all elements
${\bf{A}}=[{a}(i_1,...,i_d)]\in\mathbb{W}_{\bf m} $
which can be represented as the contracted products of $3$-tensors 
over the $d$-fold product index set ${\cal J}:=\times_{\ell=1}^{d-1} J_\ell$, such that
\[
{\bf{A}}=\sum\limits_{{(\alpha_1,...,\alpha_{d-1})}\in {\cal J}} 
{\bf a}^{(1)}_{1,\alpha_1}\otimes {\bf a}^{(2)}_{\alpha_1,\alpha_2} \otimes 
\cdots \otimes {\bf a}^{(d)}_{\alpha_{d-1},1},
\]
or entry-wise
\[
 a({\bf i})=\sum\limits_{(\alpha_1,...,\alpha_{d-1})={\bf 1}}^{\bf r}
 {a}^{(1)}_{1,\alpha_1}(i_1) {a}^{(2)}_{\alpha_1,\alpha_2}(i_2) 
\cdots  {a}^{(d)}_{\alpha_{d-1},1}(i_d)= A^{(1)}(i_1)A^{(2)}(i_2)\cdots A^{(d)}(i_d),
\]
with generating vectors  
${\bf a}^{(\ell)}_{\alpha_{\ell-1},\alpha_{\ell}}   \in \mathbb{R}^{M_\ell}$, 
and $r_{\ell-1}\times r_{\ell}$ matrices 
$A^{(\ell)}(i_\ell)=[{a}^{(\ell)}_{\alpha_{\ell-1},\alpha_{\ell}}(i_\ell)]$, ($\ell=1,...,d$)
under the convention $r_0=r_d=1$.
The TT representation reduces the storage cost to $\mathcal{O}(d r^2 M)$, $r=\max {r_\ell}$,
$M=\max {M_\ell}$.

It is often convenient to characterize the TT-rank ${\bf r}=(r_1,...,r_{d-1})$ with a single number. 
We therefore introduce the notion of the \emph{effective (average) rank} of a TT-tensor ${\bf{A}}$. 
In the case of equal mode sizes $M$ it is defined as the positive  solution of the quadratic equation
\begin{equation}\label{effrank}
r_1 + \sum_{k=2}^{d-1}r_{k-1}r_k + r_{d-1} = r + \sum_{k=2}^{d-1}r^2 + r
\end{equation}
and will be denoted by $r_{\text{eff}}$ or average QTT rank $r$.  

\subsection{Quantized-TT approximation of function related vectors}\label{ssec:QTT_Approx}

In the case of large mode size $M$, the asymptotic storage for a $d$th order tensor 
can be reduced to logarithmic scale $\mathcal{O}(d \log M)$ by using quantics-TT (QTT) 
tensor approximation \cite{KhQuant:09}. 
In the present paper, we apply this 
approximation techniques to long $N_{ov}$-vectors representing the columns of $L_V$ factor and other
parts of the BSE matrix, as well as to eigenvectors of the BSE system.

The QTT-type approximation of an $M$-vector with $M=q^{d'}$, $d'\in \mathbb{N}$, $q=2,3,...$,
is defined as the tensor decomposition (approximation) in the TT or canonical format applied 
to a tensor obtained by the folding (reshaping) of the initial vector to an $d'$-dimensional 
$q\times \ldots \times q$ data array. The latter is thought as an element of the 
multi-dimensional quantized tensor 
space $\mathbb{Q}_{{q},d'}= \bigotimes_{j=1}^{d'}\mathbb{K}^{q},
 \; \mathbb{K}\in \{\mathbb{R},\mathbb{C}\}$, and $d'$ is the auxiliary dimension
 (virtual, in contrary to the real space dimension $d$)  parameter
that measures the depth of the quantization transform. A vector 
$
{\bf x}=[x(i)]_{i\in I}\in \mathbb{W}_{{M}},
$
is reshaped to its multi-dimensional quantics image in $\mathbb{Q}_{q,d'}$ by $q$-adic folding, 
\[
\mathcal{F}_{q,d'}:{\bf x} \to {\bf{Y}}
=[y({\bf j})]\in \mathbb{Q}_{q,d'}, \quad {\bf j}=\{j_{1},\ldots,j_{d'}\}, 
\]
with  $j_{\nu}\in \{1,\ldots, q\}$ for $\nu=1,...,L$.
Here for fixed $i$, we have $y({\bf j}):= x(i)$, and $j_\nu=j_\nu(i)$ is defined via $q$-coding,
$
j_\nu - 1= C_{-1+\nu}, 
$
such that the coefficients $C_{-1+\nu} $ are found from the
$q$-adic representation  of $i-1$ (binary coding for $q=2$),
\[
i-1 =  C_{0} +C_{1} q^{1} + \cdots + C_{d'-1} q^{d'-1}\equiv
\sum\limits_{\nu=1}^{d'} (j_{\nu}-1) q^{\nu-1}.
\]
Assuming that for the rank-${\bf r}$ TT approximation of the quantics image ${\bf{Y}}$
there  holds $r_k \leq r$, $k=1,\ldots ,L$, 
the complexity of this tensor representation ${\bf{Y}}$  reduces to the logarithmic scale
$$
q r^2 \log_q M \ll M. 
$$ 

The computational gain of the QTT approximation is justified by the 
perfect rank decomposition proven in \cite{KhQuant:09} for 
a wide class of function-related tensors obtained by sampling the corresponding functions
over a uniform or properly refined grid. This class of functions includes
complex exponentials, trigonometric functions, polynomials 
and Chebyshev polynomials, wavelet basis functions.
We refer to \cite{DKhOs-parabolic1-2012,osel-constr-2013,VeBoKh:Ewald:14,khor-survey-2014} 
for further results on QTT approximation and their application.

The QTT-type approximation to some $2^{d'} \times 2^{d'}$ matrices was introduced in
\cite{osel-2d2d-2010}. The construction and analysis of the QTT representation to 
the Laplacian related matrices is presented in \cite{khkaz-lap-2012}. 
The definition of Matrix Product Operator (MPO) is given in \S\ref{ssec:QTTsolver_TDA}.

In this paper we apply the QTT approximation method to the BSE eigenvalue problem,
where matrices and eigenvectors are transformed to the QTT representation and the arising 
high-dimensional eigenvalue problem is solved by using the block-TT tensor 
format \cite{DoKhSavOs_mEIG:13}.

 \subsection{Analysis of the QTT rank parameters for the BSE data}
\label{ssec:QTTranks_BSE}

The motivating point for the following considerations in this section was the curious 
numerical observation discussed in \cite{VeKhBoKhSchn:12,VeKhorMP2:13}.
It was demonstrated that the QTT ranks \cite{KhQuant:09} 
of column vectors in the Cholesky factor for the TEI tensor are almost equal to the fundamental 
structural characteristic of the molecular system, the number of occupied molecular 
orbitals $N_{o}$, i.e. do not depend
on the size $N_b^2$ of the TEI matrix, determined by the number of GTO basis functions $N_b$. 
This fact indicates the  existence of the tensor-structured QTT representation
for the Cholesky factors with the very mild complexity scaling in the matrix size $N_b^2$.

Here we demonstrate that the very similar property can be observed for the matrices and vectors 
involved in the BSE spectral problem.

First, we investigate numerically  QTT ranks of the long eigenvectors in BSE problem 
and the canonical QTT ranks in the skeleton vectors of the low-rank matrix factorizations in the case of 
compact molecules and chains of atoms.
Specifically, in numerical tests we found that
the QTT-ranks do not depend on the problem size $N_{ov}$ and, hence, on the number of GTO basis 
functions specifying the size of BSE system, but again depend only on the fundamental 
physical characteristics  of a molecular system, $N_{o}$.

Next Table \ref{tab:qtt_ranks} illustrates that for the TDA model applied to single molecules and 
to molecular chains
the average QTT ranks, computed for column vectors in $L_V$ factor in (\ref{eqn:L_V_factor}) 
and for $m_0=30$ senior TDA-eigenvectors, are almost equal or even smaller than 
the number of occupied molecular orbitals, $N_{o}$, in the system under consideration.
Notice that these results are obtained by compression of each column from $L_V$ or eigenvectors separately.
In the next section \S\ref{ssec:QTTsolver_TDA}, we apply the so-called block-TT format where
the meaning of QTT approximation is adapted to the subset of eigenvectors.

 
  \begin{table}[hbp]
 \begin{center}
 \begin{tabular}
[c]{|c|c|c|c|c|c|c|c|}%
\hline
Mol. sys. & H$_2$O& H$_{16}$ &N$_2$H$_4$ &C$_2$H$_5$OH &H$_{32}$  & C$_2$H$_5$ NO$_2$ &  C$_3$H$_7$ NO$_2$ \\
 \hline
$ N_{o}$     & $5$   & $8$    & $9$      & $13$ & $16 $  & $20$ & $24$   \\
\hline
QTT ranks of $L_V$& $5.4$   & $7$    & $9.1$    & $12.7$ & $14 $  & $17.5$  &  $21$   \\
\hline
QTT ranks of e-vectors & $5.3$ & $7.6$  & $9.1$ & $12.7$ & $13.6$  & $17.2$  &  $20.9$   \\
\hline
$N_{ov}$  & $180$  & $448$  & $657$   & $1430$ & $1792$  & $3000$  &  $4488$   \\
\hline
\end{tabular}
\end{center}
\caption{Average QTT ranks of column vectors in $L_V$ and $m_0$ senior eigenvectors in TDA problem.}
\label{tab:qtt_ranks}
\end{table}

Table \ref{tab:qtt_chain_vs_basis} demonstrates that the considerable variation of the basis size for 
fixed molecular systems of H$_{12}$ or H$_{24}$ chains (hence with fixed number $N_{o}$) 
practically does not change the QTT ranks of column vectors in $L_V$ factor in (\ref{eqn:L_V_factor}) 
(QTT ranks of BSE eigenvectors are almost the same, see Table \ref{tab:qtt_ranks}). 
\begin{center}%
\begin{table}[htb]
\begin{center}
\begin{tabular}
[c]{|c|c|r|r|r|r|}%
\hline
 H$_{12}$, $N_{o}=6$   &  $N_b$         & $36$    & $48$   & $72$   &  $84$ \\
 \cline{2-6}  &   size BSE     & $360^2$ & $504^2$ & $792^2$ &  $936^2$  \\
 \cline{2-6}   &  QTT ranks & $5.4$ & $6.5$ & $6.6$ & $7.0$    \\
\hline \hline
  H$_{24}$, $N_{o}=12$ &   $N_b$         & $72$    & $96$   & $144$   &  $168$ \\
 \cline{2-6} & size BSE       & $1440^2$ & $2016^2$ & $3168^2$ & $3744^2$   \\
 \cline{2-6} &  QTT ranks & $9.5$ & $11.6$ & $11.8$ & $12.7$   \\
 \hline 
\end{tabular}
\end{center}
\caption{Average QTT ranks of column vectors in $L_V$ factor vs. $N_{o}$ and the BSE-size for 
Hydrogen chains: illustrates weak dependence on the number of basis functions $N_b$.}
\label{tab:qtt_chain_vs_basis}
\end{table}
\end{center}

Figure \ref{fig:QTTranks_vs_Norb} indicates that the behavior of QTT ranks in the column vectors of $L_V$-factor
reproduces the system size $N_{ov}$  in terms of $N_{o}$ on the logarithmic scale.

\begin{figure}[htbp]
\centering
\includegraphics[width=7.1cm]{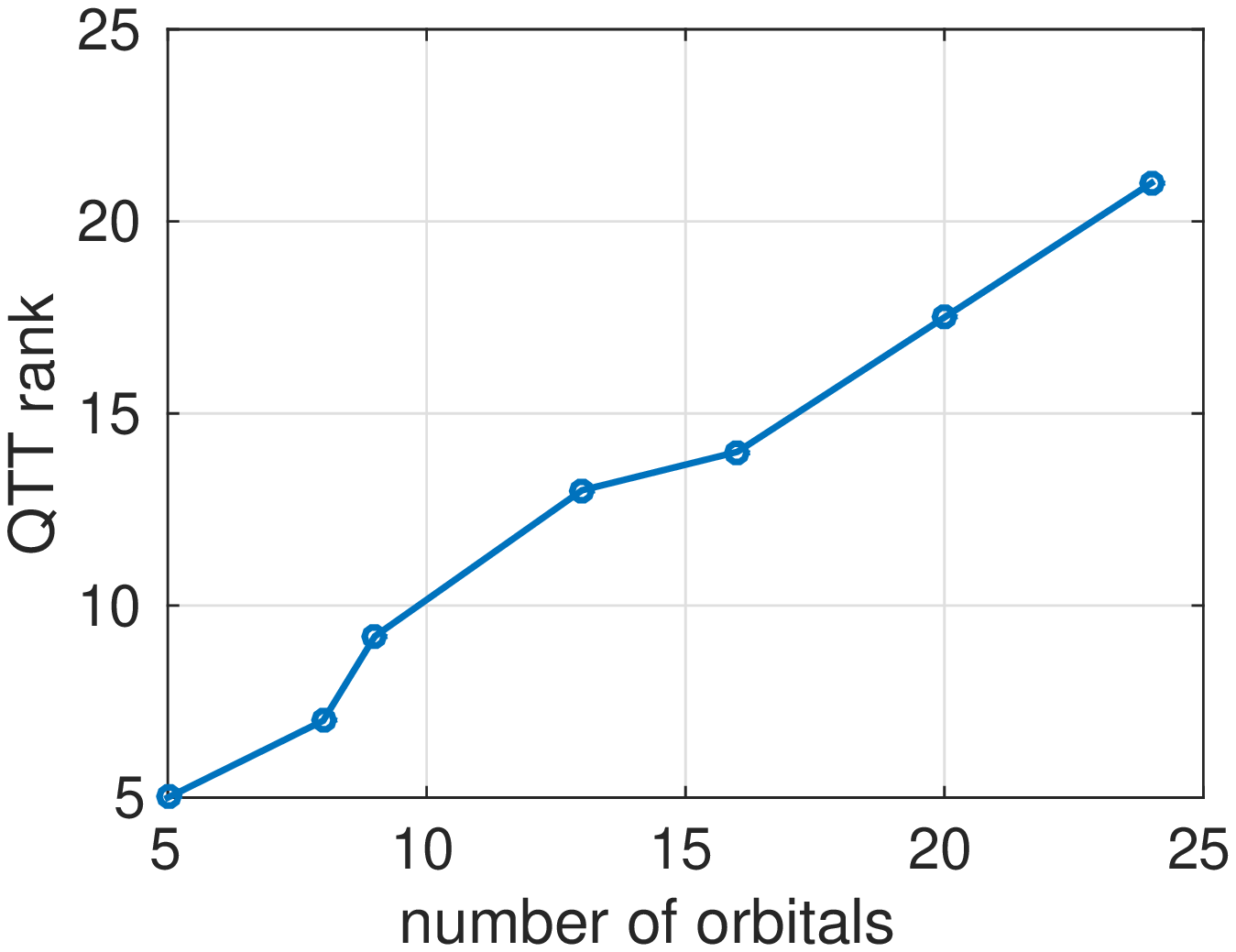} \quad
\includegraphics[width=7.1cm]{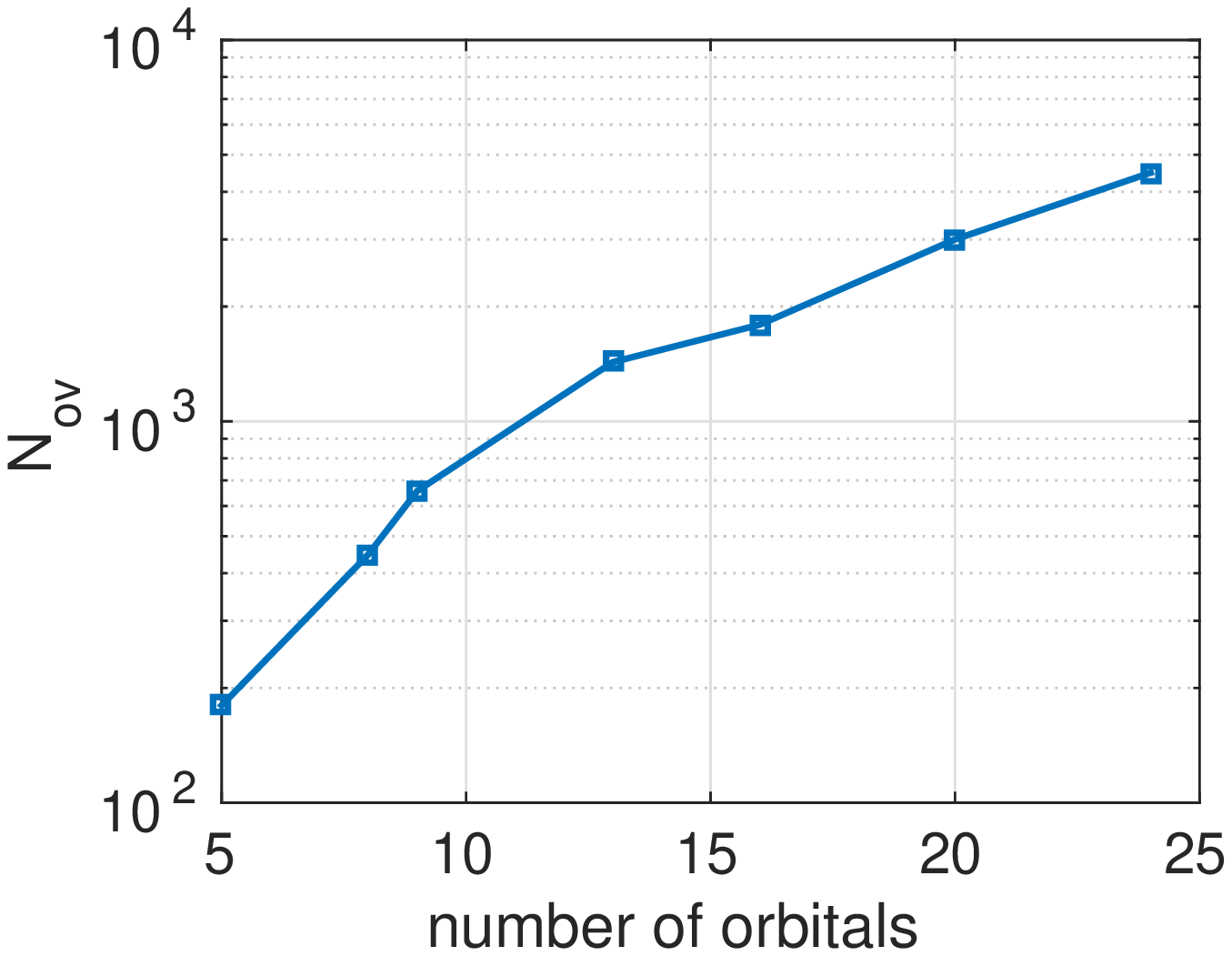}
\caption{\small QTT ranks (left) and  $N_{ov}$ on logarithmic scale (right) 
vs. the number of orbitals, $N_{o}$.}
\label{fig:QTTranks_vs_Norb}
\end{figure}

It is worth to note that in the case of single molecules the commonly used number of GTO basis sets satisfy the 
relation $N_{b}/N_{o}\geq C_{GTO} \approx 10$ (see examples below), which implies the asymptotic 
behavior $N_{ov}\approx C_{GTO} N_{o}^2$. Hence, the QTT rank estimate $r_{QTT}\approx N_{o}$ 
obtained above leads to the asymptotic complexity of the QTT-based tensor solver,
\begin{equation}\label{eqn:BSE_QTTcost}
{\cal W}_{BSE}=\mathcal{O}(\log(N_{ov})  r_{QTT}^2)=\mathcal{O}(\log(N_{o})N_{o}^2),
\end{equation} 
 which is asymptotically on the same scale as that for the data-structured algorithms 
 based on full-vector arithmetics (see Section \ref{sec:Iter_Struct_BSE}).
The same observation applies to the chain type molecular systems.

However, the high precision Hartree-Fock calculations may require much larger GTO basis sets 
so that the constant $C_{GTO}$ may increase considerably. 
In this situation the QTT-based tensor approach seems to outperform 
the algorithms in full-vector arithmetics.  

Even more important consequence of (\ref{eqn:BSE_QTTcost}) is that 
the rank behavior $r_{QTT}\approx N_{o}$ 
indicates that the QTT tensor-based algorithm has the memory requirements and 
the algebraic complexity $\mathcal{O}(\log (N_{o}) N_{o}^2)$ 
depending only on the fundamental physical characteristics of the molecular system, 
the number of occupied molecular orbitals, $N_{o}$ (but not on the system size $N_{ov}^2$). 
Hence, we introduce the {\it hypothesis}: estimate (\ref{eqn:BSE_QTTcost}) determines the 
irreducible lower bound on the asymptotic algebraic complexity of the large scale BSE eigenvalue problem.

\subsection{Block-TT eigenvalue solver in high-dimensional QTT format}
\label{ssec:QTTsolver_TDA}

Since the eigenvectors of the TDA problem exhibit moderate QTT ranks, it is tempting to 
apply the TT eigenvalue solver, such as the DMRG algorithm \cite{white-dmrg-1993,Scholl:11}.
As we are always looking for several eigenvectors, we can use the accelerated version 
\cite{DoKhSavOs_mEIG:13}, where only one TT block is considered at once.
The rank adaptivity (as well as fast convergence) comes from the separation of the eigenvalue 
enumerator from the original index, when we replace the enumerator to the next TT block.

This algorithm can be adapted to the input data we have in the Bethe-Salpeter problem.
In the general setting, given an eigenvalue problem $A U = U \Lambda$,
the method assumes that the matrix is given in the matrix TT (also called as Matrix Product Operator) format
\begin{equation}
A(\mathbf{i},\mathbf{j}) =  \mathbf{A}^{(1)}(i_1,j_1)\mathbf{A}^{(2)}(i_2,j_2) 
\cdots \mathbf{A}^{(d')}(i_{d'},j_{d'}),
\label{eq:ttm}
\end{equation}
where $\mathbf{i}$ and $\mathbf{j}$ are multi-indices comprised from 
$i_1,\ldots,i_{d'}$ and $j_1,\ldots,j_{d'}$, respectively.
Each term $\mathbf{A}^{(l)}(i_l,j_l)$ in the right-hand side is a $r_{l-1} \times r_l$ matrix, 
similarly to the ``vector'' TT format, but parametrized by two original indices $i_l,j_l$, 
$1\leq i_l,j_l \leq q_\ell$.
Here we use the general notation $d'$ for the dimension parameter used in the description 
of QTT format in \S\ref{ssec:QTT_Approx}. 
The mode size $M_\ell$ in the general definition of TT format is substituted by $q_\ell$ 
for the QTT tensors.

A slight generalization of the QTT format introduced in Section \ref{ssec:QTT_Approx} involves 
different \emph{prime} dimensions of a tensor, instead of the same value $q$.
Given initial dimensions $N_o$ and $N_v$, we decompose these numbers into smallest nontrivial prime factors, say,
$$
N_o = q_1\cdots q_o, \quad N_v = q_{o+1} \cdots q_{d'},
$$
such that the total problem size $N_{ov} = q_1\cdots q_{d'}$ yields the corresponding index factorization, 
allowing the TT format \eqref{eq:ttm}.
If $N_o$ and $N_v$ are powers of $2$, we end up with the classical QTT format with 
$2 \times \cdots \times 2$-tensors.
But in a more general case, any other small factors (like $3,5,7$, and so on) are possible.

The eigenvectors are sought in the block QTT format
\begin{equation}
U_{m}(\mathbf{i}) = \mathbf{U}^{(1)}(i_1)\cdots \mathbf{U}^{(\ell-1)}(i_{\ell-1}) 
\mathbf{\hat U}^{(\ell)}(i_{\ell}, m)  \mathbf{U}^{(\ell+1)}(i_{\ell+1}) \cdots \mathbf{U}^{(d')}(i_{d'}),
\label{eq:btt}
\end{equation}
where $\mathbf{\hat U}^{(\ell)}$ is a special TT block, containing the eigenvector enumerator $m=1,\ldots,m_0$.
Using the SVD, one can decompose $\mathbf{\hat U}^{(\ell)}$ and move $m$ to a neighboring 
block \cite{DoKhSavOs_mEIG:13}.
In this paper, we only need to know that the DMRG technique is a Galerkin projection method:
 the remaining blocks $\mathbf{U}^{(p)}$, $p\neq \ell$, constitute the \emph{frame} 
 matrix $U_{\neq \ell}\in\mathbb{R}^{N_{ov} \times r_{\ell-1}q_\ell r_\ell},$
$$
U_{\neq \ell}(\mathbf{i}, \alpha_{\ell-1}j_{\ell}\alpha_{\ell}) = \mathbf{U}^{(1)}(i_1)\cdots 
\mathbf{U}^{(\ell-1)}_{:,\alpha_{\ell-1}}(i_{\ell-1}) 
\delta_{i_\ell,j_\ell}  \mathbf{U}^{(\ell+1)}_{\alpha_\ell,:}(i_{\ell+1}) \cdots \mathbf{U}^{(d')}(i_{d'}),
$$
such that the \emph{local} problem reads
\begin{equation}
\left(U_{\neq \ell}^\top A U_{\neq \ell}\right)  \hat u^{(\ell)} = \hat u^{(\ell)} \Lambda, 
\qquad \hat u^{(\ell)} \in \mathbb{R}^{r_{\ell-1}q_\ell r_\ell \times m_0},
\label{eq:locsys}
\end{equation}
where the diagonal $\Lambda$ contains the Ritz values, approximating the eigenvalues of the original problem.
After solving this problem, the block $\mathbf{\hat U}^{(\ell)}$ is populated with the elements 
of $\hat u^{(\ell)}$, and the process continues for the next block.

\begin{center}%
\begin{table}[htb]
\begin{center}
\begin{tabular}
[c]{|c|c|r|r|r|r|}%
\hline
 H$_{12}$, $N_{o}=6$,   & $N_b$                                   & $36$   & $48$ & $72$   &  $84$ \\
 \cline{2-6} 1 DMRG iter  &  CPU time                              & 0.019   & 0.02   & 0.034   & 0.04    \\
 \cline{2-6}  &  av. QTT rank                                      & 19.0   & 20.2 & 22.0   & 22.6    \\
 \cline{2-6}  &  $\frac{\mathtt{mem}(\mbox{QTT})}{N_{ov}m_0}$ & 1.07 & 1.00   & 0.94   & 0.92    \\
 \cline{2-6}  &  $\frac{\|\mu_{qtt}-\mu_\star\|}{\|\mu_\star\|}$ & 2.86e-2& 1.22e-2& 4.60e-3& 8.41e-3 \\
\hline \hline
 H$_{12}$, $N_{o}=6$,  &  CPU time                                 & 0.02   & 0.04   & 0.06   & 0.08    \\
 \cline{2-6} 2 DMRG iters  &  av. QTT rank                         & 9.7    & 14.5   & 14.7 & 13.9    \\
 \cline{2-6}  &  $\frac{\mathtt{mem}(\mbox{QTT})}{N_{ov}m_0}$ & 0.25 & 0.35   & 0.23   & 0.18    \\
 \cline{2-6}  &  $\frac{\|\mu_{qtt}-\mu_\star\|}{\|\mu_\star\|}$ & 3.29e-3& 6.36e-3& 5.84e-3& 7.03e-3 \\
\hline \hline
  H$_{24}$, $N_{o}=12$ & $N_b$ & $72$    & $96$ & $144$   &  $168$ \\
 \cline{2-6} 1 DMRG iter  &  CPU time                              & 0.10    & 0.17   & 0.09 & 0.12 \\
 \cline{2-6}  &  av. QTT rank                                      & 21.8    & 22.5 & 23.5 & 23.7 \\
 \cline{2-6}  &  $\frac{\mathtt{mem}(\mbox{QTT})}{N_{ov}m_0}$ & 0.42 & 0.36   & 0.66 & 0.74 \\
 \cline{2-6}  &  $\frac{\|\mu_{qtt}-\mu_\star\|}{\|\mu_\star\|}$ & 1.95e-1 & 1.10e-1& 6.8e-2 & 5.8e-2 \\
\hline \hline
 H$_{24}$, $N_{o}=12$  &  CPU time                                 & 0.06    & 0.1 & 0.23 & 0.21 \\
 \cline{2-6} 2 DMRG iters  &  av. QTT rank                         & 13.5    & 19.8   & 17.7& 17.8 \\
 \cline{2-6}  &  $\frac{\mathtt{mem}(\mbox{QTT})}{N_{ov}m_0}$ & 0.14 & 0.20       & 0.3  & 0.3 \\
 \cline{2-6}  &  $\frac{\|\mu_{qtt}-\mu_\star\|}{\|\mu_\star\|}$ & 6.43e-3 & 9.50e-3  & 8.69e-3 &  8.97e-3 \\
\hline
\end{tabular}
\end{center}
\caption{DMRG iteration in block-QTT format for TDA model with $m_0=30$ sought eigenvalues and all 
low-rank approximation thresholds $0.1$. $\mu_\star$ is computed for the exact 
TDA matrix {\eqref{eqn:BSE-Tamm-Danc}}.}
\label{tab:dmrg}
\end{table}
\end{center}

Numerical experiments show that $\boldsymbol{\Delta\varepsilon}$ and $\overline{W}$ 
are well compressible in the matrix QTT format \eqref{eq:ttm}.
However, this is not the case for $V$.
We utilize the fact that $V$ can be well approximated by a low-rank matrix, $V = L_V L_V^\top$.
The factor $L_V$ has the same conceptual meaning as the eigenvectors: it is a horizontal stack of 
$R$ vectors of length $N$.
Hence we can use the block TT format \eqref{eq:btt} for $L_V$ (replacing $U$ by $L_V$).
It is even easier since we do not need to move the enumerator $m$, but can fix it in the last TT block.
In each DMRG step, the projected matrix \eqref{eq:locsys} is constructed as 
$$
U_{\neq \ell}^\top A U_{\neq \ell} = U_{\neq \ell}^\top \boldsymbol{\Delta\varepsilon} U_{\neq \ell}
+ \left(U_{\neq \ell}^\top L_V\right) \left(U_{\neq \ell}^\top L_V\right)^\top - 
U_{\neq \ell}^\top \overline{W} U_{\neq \ell},
$$
where each product is implemented in a fast way, using the TT formats of $U_{\neq \ell}$, 
$\boldsymbol{\Delta\varepsilon}$, $L_V$ and $\overline{W}$.
\begin{remark}\label{rem:W_2_QTT_direct}
Note that here $\overline{W}$ is the original matrix from \eqref{eq:AB_ex}, compressed 
in the matrix QTT format \eqref{eq:ttm}.
No additional low-rank or block-diagonal constraints are imposed.
Therefore the results of the DMRG method in this section should be compared 
directly to the result of the exact eigenvalue solver.
\end{remark}

The reduced eigenvalue problem \eqref{eq:locsys} has the size $r_{\ell-1}q_\ell r_\ell$ and 
can be solved using the full \texttt{eig}.
The only explicitly iterative part is a sweep over different TT blocks in the alternating fashion.
By ``iteration'', we mean the sequential sweep from the first to the $d'$-th TT block, 
or the other way around.

The numerical results are presented in Table \ref{tab:dmrg}: 
CPU time (sec.), average QTT rank, memory ratio 
(the storage of the QTT format over the total number of elements in the full representation) 
and the relative error of the eigenvalues.
We use the tolerance $10^{-6}$ to compress $\boldsymbol{\Delta\varepsilon}$ into 
the matrix TT format\footnote{This accuracy is necessary, since $\boldsymbol{\Delta\varepsilon}$ 
is the dominant part of the matrix. Fortunately, the TT ranks 
of $\boldsymbol{\Delta\varepsilon}$ are below $10$ even for such accuracy, 
whereas the ranks of $L_V$ and $\overline{W}$ may exceed a hundred.}, but for all other approximations, 
including the factorization $V=L_VL_V^\top$, the tolerance is set to $\varepsilon = 0.1$.
We notice that one DMRG iteration gives insufficient accuracy of the solution, 
but the second iteration delivers a relative error below the theoretical estimate $\varepsilon^2$.
The CPU time is comparable or smaller than the time of the best Sherman-Morrison 
inversion methods in the previous section, as demonstrated in Table \ref{tab:times_QTT_TDA} 
(cf. Table \ref{tab:times_TDABSE_Blsp}). Recall that the row ``absolute error'' 
in Table \ref{tab:times_QTT_TDA} represents the quantity 
$\|\mu_{qtt}-\mu_\star\|= ({\sum}_{m=1}^{m_0} (\mu_{qtt,m}-\mu_{\star,m})^2)^{1/2} $
characterizing the total absolute error in the first $m_0$ eigenvalues 
calculated in the Euclidean norm.  
\begin{table}[htbp]
 \begin{center}
 \begin{tabular}
[c]{|c|c|c|c|c|c| }%
\hline
Molecular syst.&  C$_2$H$_5$OH & H$_{32}$  & C$_2$H$_5$ NO$_2$ & H$_{48}$ & C$_3$H$_7$ NO$_2$ \\
         \hline\hline
 TDA  size & $1430^2$   &  $1792^2$    & $3000^2$  & $4032^2$ & $4488^2$   \\
 \hline
time QTT eig &  $0.14$   & $0.23$      & $0.32$ & $0.28$    &  $0.63$   \\
\hline
abs. error (eV)  &  $0.08$ & $0.19$      & $0.17$ & $ 0.14$ &  $ 0.00034$   \\
\hline
 \end{tabular}
\end{center}
\caption{Time (s) and absolute error (eV) for QTT-DMRG eigensolvers for TDA matrix. }
\label{tab:times_QTT_TDA}
\end{table}

The QTT format provides also a considerable reduction of memory needed to store eigenvectors.

\section{Conclusions}\label{sec:Conclusions}

This paper presents efficient iterative solution of the Bethe-Salpeter large-scale eigenvalue
problem using the reduced basis approach via low-rank factorizations introduced in \cite{BeKhKh_BSE:15}.

For the static screen interaction part of the BSE submatrix, which was problematic
for the low-rank representation in \cite{BeKhKh_BSE:15}, we have found a beneficial 
substitution by a small sub-block, which reduces the approximation error by the order of magnitude.
Moreover, it provides the two-sided error bounds for the exact BSE excitation energies
in the case of compact and chain-type molecular systems.

We show that the structured inverse iterations (by using matrix inverse) provide fast convergence
for calculation of the required central part of the BSE spectrum.
For both  BSE and TDA models,  the inverse matrix can be represented in the same 
diagonal plus low-rank plus reduced-block format by using the Sherman-Morrison scheme.
The estimates on the complexity of algorithms for diagonal plus low-rank plus
reduced-block inverse iterations are presented in Lemmas 4.1 and 4.3.

Solution of the BSE spectral problem in the QTT format is disscussed in detail.
The QTT tensor transform of the initial BSE system to the higher dimensional
setting allows to construct a structural solver of the complexity $\mathcal{O}(\log(N_{o})N_{o}^2)$, 
 see (\ref{eqn:BSE_QTTcost}). This complexity it is determined by only
the number of occupied orbitals, $N_{o}$, in the molecular system 
(i.e. on physical characteristics of the molecule), but it is almost independent of the
system size determined by the number of atomic orbitals basis functions, $N_b$.
In numerical tests we observe dramatical reduction of solution time.
For example, TDA calculations in QTT format for C$_2$H$_5$OH molecule
with matrix size $1430^2$ take $0.14$ sec,
while for C$_3$H$_7$NO$_2$ (Alanine amino acid) with TDA matrix size $4488^2$
CPU time increases only to $0.63$ sec.

The results are confirmed  by a number of numerical tests conducted through out the paper
for various moderate size molecules and molecular chains. 
Note that the solution of the eigenvalue problem with the
rank-structured representation of the BSE matrix reduces calculation times
for large enough molecules at least by two orders of magnitude, 
see, for example, Table \ref{tab:times_BSE}, where for Alanine amino-acid,
with the matrix size $8976^2$, direct calculation takes $903$ s, 
while the low-rank iteration takes $4$ s.
Further reduction of complexity is achieved when using the DMRG-type iteration 
in the block-QTT tensor format, see Tables \ref{tab:dmrg}, \ref{tab:times_QTT_TDA}.

Several directions for future research work on the rank-structured reduced basis method for 
computation of excitation energies of molecules and solids will be considered.
Particularly, this includes 
comprehending the considered BSE model by some additional correction terms,
developments of the new data-sparse matrix structures,
and further applications of algorithms to large and lattice-structured molecular systems.

\begin{footnotesize}

\bibliographystyle{abbrv}
\bibliography{BSE_Fock}

\begin{thebibliography}{10}

\bibitem{BaiLi:12}
Z.~Bai and R.-C. Li.
\newblock Minimization principle for linear response eigenvalue problem, {I}:
  Theory.
\newblock {\em SIAM J. Matrix Anal. Appl.}, 33(4):10751100, 2012.

\bibitem{BaiLi:13}
Z.~Bai and R.-C. Li.
\newblock Minimization principle for linear response eigenvalue problem, {II}:
  Computation.
\newblock {\em SIAM J. Matrix Anal. Appl.}, 34(2):392--416, 2013.

\bibitem{BeFa:97}
P.~Benner and H.~F{a\ss b}ender.
\newblock An implicitly restarted symplectic {L}anczos method for the
  {H}amiltonian eigenvalue problem.
\newblock {\em Linear Algebra Appl.}, 263:75--111, 1997.

\bibitem{BeFaYa:15}
P.~Benner, H.~Fa\ss{}bender, and C.~Yang.
\newblock Some remarks on the complex {$J$}-symmetric eigenproblem.
\newblock Preprint MPIMD/15-12, Max Planck Institute Magdeburg, July 2015.

\bibitem{BeKhKh_BSE:15}
P.~Benner, V.~Khoromskaia, and B.~N. Khoromskij.
\newblock A reduced basis approach for calculation of the {B}ethe-{S}alpeter
  excitation energies using low-rank tensor factorizations.
\newblock {\em Molecular Physics, DOI: 10.1080/00268976.2016.1149241},
  (arXiv:1505.02696v1, 2015), 2016.

\bibitem{BeMeXu:98}
P.~Benner, V.~Mehrmann, and H.~Xu.
\newblock A numerically stable, structure preserving method for computing the
  eigenvalues of real {H}amiltonian or symplectic pencils.
\newblock {\em Numerische Mathematik}, 78(3):329--358, 1998.

\bibitem{BunByeMehrm:92}
A.~Bunse-Gerstner, R.~Byers, and V.~Mehrmann.
\newblock A chart of numerical methods for structured eigenvalue problems.
\newblock {\em SIAM J. Matrix Anal. Appl.}, (13):419--453, 1992.

\bibitem{BGFa:15}
A.~Bunse-Gerstner and H.~Fa{\ss}bender.
\newblock Breaking {Van L}oan's curse: A quest for structure-preserving
  algorithms for dense structured eigenvalue problems.
\newblock In P.~Benner, M.~Bollh\"ofer, D.~Kressner, C.~Mehl, and T.~Stykel,
  editors, {\em Numerical Algebra, Matrix Theory, Differential-Algebraic
  Equations and Control Theory}, pages 3--23. Springer International
  Publishing, 2015.

\bibitem{DeSaStJaCoLo:12}
J.~Deslippe, G.~Samsonidze, D.~A. Strubbe, M.~Jain, M.~L. Cohen, and S.~Louie.
\newblock Berkeley{GW}: A massively parallel computer package for the
  calculation of the quasi-particle and optical properties of materials and
  nanostructures.
\newblock {\em Comp. Phys. Communications}, 183:1269--1289, 2012.

\bibitem{DoKhSavOs_mEIG:13}
S.~Dolgov, B.~Khoromskij, D.~Savostyanov, and I.~Oseledets.
\newblock Computation of extreme eigenvalues in higher dimensions using block
  tensor train formats.
\newblock {\em Comp. Phys. Communications}, 185(4):1207--1216, 2014.

\bibitem{dc-phd}
S.~V. Dolgov.
\newblock {\em Tensor product methods in numerical simulation of
  high-dimensional dynamical problems}.
\newblock PhD thesis, University of Leipzig, 2014.

\bibitem{DKhOs-parabolic1-2012}
S.~V. Dolgov, B.~N. Khoromskij, and I.~V. Oseledets.
\newblock Fast solution of multi-dimensional parabolic problems in the tensor
  train/quantized tensor train--format with initial application to the
  {Fokker}-{Planck} equation.
\newblock {\em SIAM J. Sci. Comput.}, 34(6):A3016--A3038, 2012.

\bibitem{FaKre:06}
H.~Fa\ss{}bender and D.~Kressner.
\newblock Structured eigenvalue problem.
\newblock {\em GAMM Mitteilungen}, 29(2):297--318, 2006.

\bibitem{GraKresTo:13}
L.~Grasedyck, D.~Kressner, and C.~Tobler.
\newblock A literature survey of low-rank tensor approximation techniques.
\newblock {\em arXiv:1302.7121v1}, 2013.

\bibitem{Hedin}
L.~Hedin.
\newblock New method for calculating the one-particle {Green's} function with
  application to the electron-gas problem.
\newblock {\em Phys. Rev.}, 139:A796, 1965.

\bibitem{khkaz-lap-2012}
V.~A. Kazeev and B.~N. Khoromskij.
\newblock Low-rank explicit {QTT} representation of the {L}aplace operator and
  its inverse.
\newblock {\em SIAM J. Matrix Anal. Appl.}, 33(3):742--758, 2012.

\bibitem{vekh:13}
V.~Khoromskaia.
\newblock Black box {H}artree-{F}ock solver by the tensor numerical methods.
\newblock {\em Comp. Methods in Applied Math.}, 14:89--111, 2014.

\bibitem{VeBoKh:Ewald:14}
V.~Khoromskaia and B.~N. Khoromskij.
\newblock Grid-based lattice summation of electrostatic potentials by assembled
  rank-structured tensor approximation.
\newblock {\em Comp. Phys. Communications}, 185(12):3162--3174, 2014.

\bibitem{VeKhorMP2:13}
V.~Khoromskaia and B.~N. Khoromskij.
\newblock {M}{\o}ller-{P}lesset ({M}{P}2) energy correction using tensor
  factorizations of the grid-based two-electron integrals.
\newblock {\em Comp. Phys. Communications}, 185(1):2--10, 2014.

\bibitem{VeKhorTromsoe:15}
V.~Khoromskaia and B.~N. Khoromskij.
\newblock Tensor numerical methods in quantum chemistry: from {Hartree-Fock} to
  excitation energies.
\newblock {\em Phys. Chem. Chem. Phys.}, 17:31491 -- 31509, 2015.

\bibitem{VeKhBoKhSchn:12}
V.~Khoromskaia, B.~N. Khoromskij, and R.~Schneider.
\newblock Tensor-structured calculation of two-electron integrals in a general
  basis.
\newblock {\em SIAM J. Sci. Comput.}, 35(2):A987--A1010, 2013.

\bibitem{KhQuant:09}
B.~N. Khoromskij.
\newblock ${O}(d\log {N})$-quantics approximation of ${N}$-$d$ tensors in
  high-dimensional numerical modeling.
\newblock {\em J. Constr. Approx.}, 34(2):257--289, 2011.

\bibitem{khor-survey-2014}
B.~N. Khoromskij.
\newblock Tensor {N}umerical {M}ethods for {M}ultidimensional {PDE}s: {B}asic
  {T}heory and {I}nitial {A}pplications.
\newblock {\em ESAIM: Proceedings and Surveys, N. Champagnat, T. Leli{\'e}vre,
  A. Nouy, eds}, 48:1--28, January 2015.

\bibitem{knyazev-lobpcg-2001}
A.~V. Knyazev.
\newblock Toward the optimal preconditioned eigensolver: Locally optimal block
  preconditioned conjugate gradient method.
\newblock {\em SIAM Journal on Scientific Computing}, 23(2):517--541, 2001.

\bibitem{KoldaB:07}
T.~Kolda and B.~W. Bader.
\newblock Tensor decompositions and applications.
\newblock {\em SIAM Review}, 51(3):455--500, 2009.

\bibitem{SBotti:14}
S.~K\"orbel, P.~Boulanger, I.~Duchemin, X.~Blase, M.~AL~Marques, and S.~Botti.
\newblock Benchmark many-body {GW} and {B}ethe-{S}alpeter calculations for
  small transition metal molecules.
\newblock {\em Journal of Chemical Theory and Computation}, 10(9):3934--3943,
  2014.

\bibitem{Kressner_book:05}
D.~Kressner.
\newblock {\em Numerical Methods for General and Structured Eigenvalue
  Problems}, volume~46 of {\em Lecture Notes in Computational Science and
  Engineering}.
\newblock Springer, Berlin/Heidelberg, 2005.

\bibitem{LinSaadYa:15}
L.~Lin, Y.~Saad, and C.~Yang.
\newblock Approximating spectral densities of large matrices.
\newblock {\em ArXiv:1308.5467.v2.}, 2015.

\bibitem{MMMM}
D.~Mackey, N.~Mackey, C.~Mehl, and V.~Mehrmann.
\newblock Structured polynomial eigenvalue problems: good vibrations from good
  linearizations.
\newblock {\em SIAM J. Matrix Anal. Appl.}, 28(4):1029--1051, 2006.

\bibitem{MMT03}
D.~Mackey, N.~Mackey, and F.~Tisseur.
\newblock Structured tools for structured matrices.
\newblock {\em Electronic Journal of Linear Algebra (ELA)}, 10:106--145, 2003.

\bibitem{Meh08}
C.~Mehl.
\newblock On asymptotic convergence of nonsymmetric {Ja}cobi algorithms.
\newblock {\em SIAM J. Matrix Anal. Appl.}, 30:291--311, 2008.

\bibitem{NaPoSaad:13}
E.~Napoli, E.~Polizzi, and Y.~Y.~Saad.
\newblock Efficient estimation of eigenvalue counts in an interval.
\newblock {\em arXiv:1308.4275v2}, 2014.

\bibitem{OniReRu:02}
G.~Onida, L.~Reining, and A.~Rubio.
\newblock Electronic excitations: density-functional versus many-body
  {G}reen's-function approaches.
\newblock {\em Rev. of Modern Physics}, 74:601--659, 2002.

\bibitem{osel-2d2d-2010}
I.~V. Oseledets.
\newblock Approximation of $2^d \times2^d$ matrices using tensor decomposition.
\newblock {\em SIAM J. Matrix Anal. Appl.}, 31(4):2130--2145, 2010.

\bibitem{osel-constr-2013}
I.~V. Oseledets.
\newblock Constructive representation of functions in low-rank tensor formats.
\newblock {\em Constr. Appr.}, 37(1):1--18, 2013.

\bibitem{ot-tt-2009}
I.~V. Oseledets and E.~E. Tyrtyshnikov.
\newblock Breaking the curse of dimensionality, or how to use {S}{V}{D} in many
  dimensions.
\newblock {\em SIAM J. Sci. Comput.}, 31(5):3744--3759, 2009.

\bibitem{ReTouSa1:13}
E.~Rebolini, J.~Toulouse, and A.~Savin.
\newblock Electronic excitation energies of molecular systems from the
  {B}ethe-{S}alpeter equation: Example of {H}$_2$ molecule.
\newblock {\em In: Concepts and Methods in Modern Theoretical Chemistry (S.
  Ghosh and P. Chattaraj eds), vol 1: Electronic Structure and Reactivity},
  page 367, 2013.

\bibitem{ReToTeHeSa:15}
E.~Rebolini, J.~Toulouse, A.~M. Teale, T.~Helgaker, and A.~Savin.
\newblock Calculating excitation energies by extrapolation along adiabatic
  connections.
\newblock {\em Phys. Rev. A}, 91:032519, 2015.

\bibitem{ReOlRuOni:02}
L.~Reining, V.~Olevano, A.~Rubio, and G.~Onida.
\newblock Excitonic effects in solids described by time-dependent density
  functional theory.
\newblock {\em Phys. Rev. Lett.}, 88:066404, 2002.

\bibitem{RoGeSaBa:08}
D.~Rocca, R.~Gebauer, Y.~Saad, and S.~Baroni.
\newblock Turbo charging time-dependent density-functional theory with
  {L}anczos chains.
\newblock {\em J. Chem. Phys.}, 128:154104, 2008.

\bibitem{RoLuGa:10}
D.~Rocca, D.~Lu, and G.~Galli.
\newblock \emph{Ab Initio} calculations of optical absorption spectra:
  {S}olution of the {B}ethe-{S}alpeter equation within density matrix
  perturbation theory.
\newblock {\em J. Chem. Phys.}, 133:164109 1--10, 2010.

\bibitem{BeSa:51}
E.~E. Salpeter and H.~A. Bethe.
\newblock A relativistic equation for bound-state problems.
\newblock {\em Phys. Review}, 82(2):309--310, 1951.

\bibitem{SchGluHaBe:03}
W.~G. Schmidt, S.~Glutsch, P.~H. Hahn, and F.~Bechstedt.
\newblock Efficient ${O}({N}^2)$ method to solve the {B}ethe-{S}alpeter
  equation.
\newblock {\em Phys. Review B}, 67:085307, 2003.

\bibitem{Scholl:11}
U.~Schollw\"ock.
\newblock The density-matrix renormalization group in the age of matrix product
  states.
\newblock {\em Ann.Phys.}, 51(326):96--192, 2011.

\bibitem{ShJoYaDeLo:16}
M.~Shao, F.~H.~da Jornada, C.~Yang, J.~Deslippe, and S.~Louie.
\newblock Structure preserving parallel algorithms for solving the
  {B}ethe-{S}alpeter eigenvalue problem.
\newblock {\em Linear Algebra and its Applications}, 488:148--167, 2016.

\bibitem{Vidal-Effic-Simul-Quant-comput-2003}
G.~Vidal.
\newblock {Efficient classical simulation of slightly entangled quantum
  computations}.
\newblock {\em Phys. Rev. Lett.}, 91(14), 2003.

\bibitem{white-dmrg-1993}
S.~R. White.
\newblock Density-matrix algorithms for quantum renormalization groups.
\newblock {\em Phys. Rev. B}, 48(14):10345--10356, 1993.

\end{thebibliography}
\end{footnotesize}
\end{document}